\newtheorem{theorem}{Theorem}[section]
\newtheorem{lemma}[theorem]{Lemma}
\newtheorem{remark}[theorem]{Remark}
\newtheorem{proposition}[theorem]{Proposition}
\newtheorem{corollary}[theorem]{Corollary}
\numberwithin{equation}{section}
\title{Minimal time control for the heat equation  with multiple impulses}
\author{Ya Xin\thanks{School of
Science, Hebei University of Technology, Tianjin 300400, China.
e-mail: 202321102016@stu.hebut.edu.cn.} \quad and \quad Qishu Yan
\thanks{Corresponding author. School of Science, Hebei University of Technology, Tianjin  300400, China.
e-mail: yanqishu@whu.edu.cn.
This work was supported by the Shijiazhuang Science and Technology Bureau
 under grant 241791227A.}}
\begin{document}

\selectlanguage{English}
\date{}

\maketitle
\begin{abstract} This paper investigates a minimal time control problem for the heat equation with multiple impulse controls.
We first establish the maximum principles for this problem
and then prove the equivalence between the minimal time impulse control problem and its corresponding minimal norm impulse control problem.
Extending our analysis to the minimal time function itself, we examine its analytical properties (particularly continuity and monotonicity) with respect to the control constraint.
This leads to a notable discovery: despite being continuous and generally monotonic, the function may fail to be strictly decreasing.
\\

\noindent\textbf{2010 Mathematics Subject Classifications.}\quad 35K05, 49J20, 49N25, 93C20

\noindent\textbf{Keywords.}\quad minimal time control, minimal norm control, impulse control, Pontryagin's maximum principle
\end{abstract}

\section{Introduction}

The vast majority of literature on time optimal control problems for parabolic equations considers controls that act at every time instant, or continuously in time,
as seen in \cite{03,02,05,14,16,22,25,28, 35}.
As an important control strategy, impulse control has been successfully applied in diverse fields
such as chemical engineering, economics, and finance \cite{08,09,17,18,21,23,24, 26,30,32,34}. This approach is particularly valuable
for systems where continuous control is impractical or unsustainable.
Recently, the minimal time  control problem for parabolic equations with a single impulse control  was discussed; see, for example, \cite{09,Lin,29}.
However, the case of multi-impulse controls remains largely  unexplored. This gap motivates our study on the minimal time control problem for the heat equation with multiple impulse controls.

\subsection{Formulation of the Problem}

Here we introduce some notations used in this paper: $\Omega\subset\mathbb{R}^N(N\geq1)$\,is a bounded domain with a $C^2$ smooth boundary $\partial\Omega$; $\omega_k(k\geq 1)$ is an open and nonempty subset in $\Omega$ with its characteristic function $\chi_{\omega_k}$; $\{e^{\Delta t}\}_{t\geq0}$ is the analytic semigroup on $L^2(\Omega)$ generated by Laplacian operator $\Delta$ with its domain $H^2(\Omega)\cap H^1_0(\Omega)$; $B_r(0)$ denotes the closed ball in $L^2(\Omega)$, centered at $0$ and of radius $r>0$; $d_{B_r(0)}(\cdot)$ denotes the distance function to the set $B_r(0)$. Let $\{\tau_k\}_{k\geq 1}$ be a strictly  increasing sequence of positive numbers  with $\displaystyle\lim_{k\to\infty} \tau_k=+\infty$. Consider the following impulse controlled heat equation:
\begin{equation}\label{2.1}
\left\{\begin{array}{lll}
	\partial_t y-\Delta y=0 & \text { in } \Omega \times(0,+\infty)\backslash\{\tau_k\}_{k\geq 1}, \\
	y=0 & \text { on } \partial \Omega \times(0,+\infty), \\
	y(\tau_k)=y(\tau_k^{-})+\chi_{\omega_{k}}  u_k & \text { in } \Omega,\text{ for each }k\geq 1,   & \\
	y(0)=y_0 & \text { in } \Omega .
\end{array}\right.
\end{equation}
Here, $y_0\in L^2(\Omega)$ is a given function; $y(\tau_k^-):=\lim\limits_{t\rightarrow\tau_k^-}y(t)$ in $L^2(\Omega)$; at the impulse instant $\tau_k$, a control $u_k$ acts on the system through local operator $\chi_{\omega_k}(k\in\mathbb{N}^+)$. Write $y(\cdot;y_0,u)$ for the solution of the system \eqref{2.1} with $u=\big(u_j\big)_{j\in\mathbb{N}^+}\in l^\infty\big(\mathbb{N}^+;L^2(\Omega)\big)$. It is well known that for each $k\geq 1$,
\begin{equation*}
y(t;y_0,u)=e^{\Delta t}y_0+\sum\limits_{j=1}^k e^{\Delta(t-\tau_j)}\chi_{\omega_j}u_j\,\,\text{ for any }\,\,t\in[\tau_k,\tau_{k+1}).
\end{equation*}
Throughout this paper,  the usual inner product and norm in $L^2(\Omega)$ are denoted by $\langle\cdot,\cdot\rangle$ and $\|\cdot\|$, respectively.  Define
\begin{equation*}
\|u\|_\infty:=\sup\limits_{k\in\mathbb{N}^+}\|u_k\|\,\,\text{ for any }\,\,u=\big(u_k\big)_{k\in\mathbb{N}^+}\in l^\infty\big(\mathbb{N}^+;L^2(\Omega)\big).
\end{equation*}
Given $M\geq0$, we denote the constraint set of  controls by
\begin{equation*}
\mathcal{U}_M:=\Big\{u\in l^\infty\big(\mathbb{N}^+;L^2(\Omega)\big) :\,\|u\|_\infty\leq M\Big\}.
\end{equation*}
Since the impulse control system \eqref{2.1} is not null controllable (see \cite{23})
and its solution undergoes free decay, we therefore  adopt the ball $B_r(0)$
 as the target set.
Accordingly, the minimal time  control problem studied in this paper is formulated as follows:
\begin{center}
$({\rm TP})_M\,\,\,\,\,\,\,\,t^*(M):=\inf\big\{t:\,y(t;y_0,u)\in B_r(0),\,\,u\in\mathcal{U}_M\big\}$.
\end{center}
Regarding the problem $({\rm TP})_M$, we provide several definitions  in order:
\begin{itemize}
\par\item[(i)] We call $t^*(M)$ the optimal time.
\par \item[(ii)] We call $u\in\mathcal{U}_M$ an admissible control, if there exists $t>0$ so that $y(t;y_0,u)\in B_r(0)$.
\par \item[(iii)]We call $u^*=\big(u_k^*\big)_{k\in\mathbb{N}^+}\in \mathcal{U}_M$ an optimal control, if $y\big(t^*(M);y_0,u^*\big)\in B_r(0)$ and $u^*_j=0$ for each $j$ such that
 $\tau_j>t^*(M)$.
\par\item[(iv)] We call $\widetilde{u}$ an optimal control with minimal norm, if $\widetilde{u}$ is an optimal control and satisfies that $\|\widetilde{u}\|_\infty\leq\|u^*\|_\infty$\,for any optimal control $u^*$.
\end{itemize}
\vspace{0.3cm}

Let $$\gamma(y_0):=\min\big\{t>0:y(t;y_0,0)\in B_r(0)\big\}.$$
In this paper, we always assume $\gamma(y_0)>\tau_1$, as otherwise the time optimal control problem becomes trivial.
Denote by $k_0$ the maximum integer such that $\tau_{k_0}\leq \gamma(y_0)$. According to the definition of  problem $({\rm  TP})_M$,
it is clear that $$\tau_1\leq t^*(M)\leq \gamma(y_0) \textrm{ for any }M\geq 0.$$

\subsection{Main Results}

 The presence of multiple impulse controls in this problem introduces a major complication:
 the optimal time $t^*(M)$ may coincide with an impulse instant $\tau_k$.
 In such cases, the state  is discontinuous  at the optimal time and  the optimal controls may  be neither  bang-bang nor unique (see \cite{Huang}).
 To address this issue,  we employ  the concept of an optimal control with minimal norm rather than a general optimal control.
This paper first derives the
 maximum principle for an optimal control with minimal norm  when $t^*(M)$ coincides with an impulse instant.
 Based on this,  we next demonstrate  the equivalence  between the minimal time and minimal norm optimal control problems.
This equivalence, in turn, provides the necessary framework for  analyzing the dynamic behavior of the minimal time function $t^*(\cdot)$.

Pontryagin's maximum principle is an important topic for time optimal control problems. Its study dates
 back to the 1950s (see, for instance, \cite{40,41}), and
 it has been extensively developed by many mathematicians. Here, we refer to \cite{42,03,46,44,45,39}.
As for  time optimal control problems with impulse controls,
in \cite{09} the authors proved the maximum principle for a class of evolution system with a single impulse control,
by using  the Hahn-Banach theorem to separate the target set and the attainable set;
in \cite{29} the author  utilized the Ekeland variational principle and obtained the maximum principle for a semilinear heat equation, also with a single impulse control.
For the multi-impulse time optimal control problem studied here,
 the maximum principle requires distinguishing two distinct cases:
wether the optimal time $t^*(M)$ coincides with an impulse instant or not.
The case in which $t^*(M)$ does not coincide with any impulse instant was addressed in \cite{Huang}.
This paper  derives the maximum principle  for the ``optimal control with minimal norm" in the complementary case,
which also implies  the  bang-bang property and  uniqueness
 of the optimal control with minimal norm.

 \par Next, we study the corresponding minimal norm control problem and its relationship with the time  optimal control problem.
 We consider   two distinct cases based on the state at the first impulse instant:
\begin{itemize}
    \par\item[\;\;\;(C1)] \,\,\,$\big\{e^{\Delta\tau_1}y_0+\chi_{\omega_1}u_1:u_1\in L^2(\Omega)\big\}\cap B_r(0)=\varnothing.$
    \par\item[\;\;\;(C2)]\,\,\,\,\,\,$\big\{e^{\Delta\tau_1}y_0+\chi_{\omega_1}u_1:u_1\in L^2(\Omega)\big\}\cap B_r(0)\neq\varnothing.$
\end{itemize}
Suppose that (C1) stands. Let $T\in(\tau_1,\gamma(y_0)]$, the corresponding minimal norm control problem is defined as:
\begin{center}
$({\rm NP})_T\,\,\,\,\,\,\,\,N^*(T):=\inf\big\{\max\limits_{1\leq j\leq k}\|v_j\|:\,y(T;y_0,v)\in B_r(0),\,\,v=(v_1,\,v_2,...,v_k,\,0,\,0,...)\big\}$,
\end{center}
where $k$ is the maximum integer so that $\tau_k\leq T$.
Regarding the problem $({\rm NP})_T$, we provide several definitions:
\begin{itemize}
\item[ (i)] We call $N^*(T)$ the minimal norm.
\item[(ii)] We call $v=\big(v_j\big)_{j\in\mathbb{N}^+}$ an admissible control, if $y(T;y_0,v)\in B_r(0)$ and $v_j=0$ for each $j>k$.
\item[(iii)] We call $v^*$ an optimal control, if it is admissible and satisfies that $\|v^*\|_\infty=N^*(T)$.
\end{itemize}
The existence of optimal controls, along with the strict monotonicity and  continuity of the function $N^*(\cdot)$, is proved
(see Lemma \ref{lemma 3.2} and  Proposition  \ref{Proposition 3.1}--\ref{Proposition 3.2}). Then we
establish the equivalence between $({\rm TP})_M$ and $({\rm NP})_T$, as stated in the following theorem.
\vspace{0.2cm}
\begin{theorem}\label{Theorem 1.2}~Assume that (C1) is true.
\begin{itemize}
  \item[(1)]
       Let $T\in(\tau_1,\gamma(y_0)]$. It stands that
        \begin{equation}\label{2.6}
        t^*\big(N^*(T)\big)=T
        \end{equation}
        and any optimal control for $({\rm NP})_T$ is the optimal control for $({\rm TP})_{N^*(T)}$.
        \item[(2)] Let $M\in[0,+\infty)$.
        \begin{itemize}
              \item[(i)]
                If $t^*(M)\in(\tau_k,\tau_{k+1})$ for some $1\leq k\leq k_0$,  then it stands that
                \begin{equation}\label{2.7}
                N^*\big(t^*(M)\big)=M
                \end{equation}
                and the optimal control for $({\rm TP})_M$ is also an optimal control for $({\rm NP})_{t^*(M)}$.
              \item[(ii)]
                If $t^*(M)=\tau_k$ for some $2\leq k\leq k_0$,  then it stands that
                \begin{equation}\label{2.8}
                N^*\big(t^*(M)\big)=M_1,
                \end{equation}
                where $M_1$ is the minimal norm among the optimal controls for $({\rm TP})_M$, and the optimal control with minimal norm for $({\rm TP})_M$ is an optimal control for $({\rm NP})_{t^*(M)}$.
        \end{itemize}
\end{itemize}
\end{theorem}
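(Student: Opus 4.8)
The plan is to prove each equivalence by combining a soft ``admissibility'' inclusion, which always yields one inequality for free, with the strict monotonicity of $N^*(\cdot)$ and the bang-bang/uniqueness structure of the time-optimal problem. Throughout I will use the elementary fact that $t^*(\cdot)$ is non-increasing, since $M'<M$ implies $\mathcal{U}_{M'}\subset\mathcal{U}_M$ and hence $t^*(M')\geq t^*(M)$; I will also record that (C1) forces $t^*(M)>\tau_1$ for every $M\geq0$, because no single control $u_1$ can drive $e^{\Delta\tau_1}y_0$ into $B_r(0)$, so that the problem $({\rm NP})_s$ is well defined whenever $s=t^*(M)$.

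For Part (1), fix $T\in(\tau_1,\gamma(y_0)]$ and set $M:=N^*(T)$. The inequality $t^*(M)\leq T$ is immediate: an optimal control $v^*$ for $({\rm NP})_T$, which exists by Lemma \ref{lemma 3.2}, satisfies $\|v^*\|_\infty=N^*(T)=M$ and $y(T;y_0,v^*)\in B_r(0)$, so $v^*\in\mathcal{U}_M$ is admissible for $({\rm TP})_M$. For the reverse inequality I would argue by contradiction: if $s:=t^*(M)<T$, then $s\in(\tau_1,T)$, and truncating an optimal control for $({\rm TP})_M$ to its first $k'$ components (where $k'$ is the largest index with $\tau_{k'}\leq s$) produces an admissible control for $({\rm NP})_s$ of norm at most $M$, whence $N^*(s)\leq M=N^*(T)$. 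Since $s<T$, this contradicts the strict decrease of $N^*(\cdot)$ established in Propositions \ref{Proposition 3.1}--\ref{Proposition 3.2}, proving \eqref{2.6}. The same $v^*$ then witnesses that every optimal control for $({\rm NP})_T$ is optimal for $({\rm TP})_{N^*(T)}$: it lies in $\mathcal{U}_M$, reaches $B_r(0)$ at $t^*(M)=T$, and vanishes for $\tau_j>T$ by admissibility.

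For Part (2), let $s:=t^*(M)$. In both cases the inequality $N^*(s)\leq$ (claimed value) comes from admissibility, by truncating an optimal (respectively minimal-norm optimal) control for $({\rm TP})_M$. The content is the reverse inequality, which I would prove by contradiction in a uniform way: suppose $N^*(s)=M''$ is strictly smaller than the claimed value and let $v^*$ be optimal for $({\rm NP})_s$ with $\|v^*\|_\infty=M''$. Then $v^*\in\mathcal{U}_{M''}$ reaches $B_r(0)$ at $s$, so $t^*(M'')\leq s$; combined with $t^*(M'')\geq t^*(M)=s$ from monotonicity, we obtain $t^*(M'')=s$, and therefore $v^*$ is itself an optimal control for $({\rm TP})_M$ of norm $M''$. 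In case (i), $s\in(\tau_k,\tau_{k+1})$, so the bang-bang property proved in \cite{Huang} forces every optimal control of $({\rm TP})_M$ to saturate the constraint, i.e.\ to have $\|\cdot\|_\infty$ equal to $M$; since $M''<M$ this is a contradiction, yielding \eqref{2.7}. In case (ii), $s=\tau_k$, and the same $v^*$ is an optimal control for $({\rm TP})_M$ with norm $M''<M_1$, contradicting the definition of $M_1$ as the minimal norm among optimal controls for $({\rm TP})_M$; this yields \eqref{2.8}. In each case the designated optimal control (the bang-bang one in case (i), the minimal-norm one $\widetilde{u}$ in case (ii)) has $\|\cdot\|_\infty$ exactly $N^*(s)$, reaches the target at $s$, and vanishes after $s$, so it is admissible for $({\rm NP})_s$ with norm $N^*(s)$ and hence optimal.

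The main obstacle I anticipate is the rigorous handling of case (ii) together with the boundary case $s=\tau_{k'}$ appearing in Part (1): because the state is discontinuous at impulse instants, I must ensure that truncating a control preserves both admissibility and the value of $\|\cdot\|_\infty$ at the impulse time, and that $N^*(\cdot)$ is genuinely strictly decreasing up to and including impulse instants, not merely on the open subintervals. Securing that the minimal-norm optimal control $\widetilde{u}$ exists and attains $M_1$ — so that $M_1$ is a true minimum rather than an infimum — is the other point to be verified, presumably from the maximum principle developed earlier for the minimal-norm problem.
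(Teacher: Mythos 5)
Your proposal is correct and follows essentially the same route as the paper: one inequality in each identity comes from admissibility of an optimal control for the other problem, the reverse inequality in Part (1) from the strict monotonicity of $N^*(\cdot)$ (Proposition \ref{Proposition 3.1}), and in Part (2) from the bang-bang property (Lemma \ref{lemma 2.2}) respectively the minimality of $M_1$. The concerns you flag at the end are already covered by the paper's infrastructure (Lemma \ref{Theorem 2.1} gives attainment of $M_1$, the definition of an optimal control already forces $u_j^*=0$ for $\tau_j>t^*(M)$ so no separate truncation argument is needed, and Proposition \ref{Proposition 3.1} gives strict decrease across impulse instants), so no genuine gap remains.
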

Two notes about Theorem \ref{Theorem 1.2} are given:
\begin{itemize}

\item
The relationship between time optimal control and norm optimal control problems  governed by evolution equations with continuous-time control
is widely studied (see, for instance, \cite{38,48,49,51,50,35}; see also the books \cite{47,39}). This equivalence,   which usually holds between the two problems,
 plays an important role in the study of time optimal control problems, see \cite{48,25,37}.

\item The equivalence of these two kinds of problems for the
semilinear heat equation with a single impulse control was  studied in \cite{09,29}. It seems  that the equivalence for the heat equation with multiple impulse
controls has not been touched on.
\end{itemize}

Based on Theorem \ref{Theorem 1.2},
the analytical properties of the minimal time  function are further explored.
We find that this function is continuous and decreasing
but not necessarily strictly monotonic.
Specifically, it is strictly decreasing whenever the optimal time does not equal  any impulse instant of the system.
Conversely, when it coincides with an  impulse instant  the  function may {remain constant }over an  interval thereafter.

To be precise,  define for each $2\leq k\leq k_0$,
\begin{equation*}
M_k^{\inf}:=\inf\big\{M\geq 0:\, t^*(M)= \tau_k\big\}
\end{equation*}
and
\begin{equation*}
M_k^{\sup}:=\sup\big\{M\geq 0:\, t^*(M)= \tau_k\big\}.
\end{equation*}
These sets have been proved to be nonempty (see the beginning of Section 4). Then the entire  interval $[0,+\infty)$ is partitioned into a sequence of disjoint subintervals,  and  the following properties of $t^*(\cdot)$ are  verified.

\begin{theorem}\label{Theorem  4.4}~Assume that (C1) stands. The function $t^*(\cdot):\,[0,+\infty)\to (\tau_1,\gamma(y_0)]$ is continuous. In particular,
it keeps still over $[M_{k}^{\inf},M_{k}^{\sup}]$ for each $2\leq k\leq k_0$, and is strictly  decreasing over $[0,+\infty)\backslash \left(\cup_{k=2}^{k_0}\left[M_{k}^{\inf},M_{k}^{\sup}\right]\right)$. Moreover, it stands that
\begin{equation}\label{0717-15}
t^*(0)=\gamma(y_0) \textrm{ and }\lim_{M\to\infty} t^*(M)=\tau_1.
\end{equation}
\end{theorem}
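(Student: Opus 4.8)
The plan is to read off every assertion from the equivalence of Theorem \ref{Theorem 1.2}, viewing $t^*(\cdot)$ as an order-theoretic inverse of $N^*(\cdot)$, supplemented only by the trivial monotonicity of $t^*$. Monotonicity is immediate from $\mathcal{U}_{M_1}\subseteq\mathcal{U}_{M_2}$ whenever $M_1\le M_2$, which makes the defining infimum non-increasing, i.e.\ $t^*(M_2)\le t^*(M_1)$. The value $t^*(0)=\gamma(y_0)$ is forced because $\mathcal{U}_0=\{0\}$, so $y(t;y_0,0)=e^{\Delta t}y_0$ and the first hitting time of $B_r(0)$ is exactly $\gamma(y_0)$. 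For $\lim_{M\to\infty}t^*(M)=\tau_1$, monotonicity and the bound $t^*\ge\tau_1$ yield a limit $\ge\tau_1$; to reach $\tau_1$ I would fix small $\varepsilon>0$ and use approximate controllability of the heat equation from $\omega_1$ in time $\varepsilon$ to pick $u_1$ with $\|e^{\Delta(\tau_1+\varepsilon)}y_0+e^{\Delta\varepsilon}\chi_{\omega_1}u_1\|<r$, so that $(u_1,0,0,\dots)$ drives the state into $B_r(0)$ by time $\tau_1+\varepsilon$ and hence $t^*(\|u_1\|)\le\tau_1+\varepsilon$; equivalently one uses $N^*(T)\to+\infty$ as $T\downarrow\tau_1$ (a consequence of (C1)) together with $t^*(N^*(T))=T$.

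The core is continuity, which I would obtain from the principle that a monotone function whose range is an interval has no jumps. By Theorem \ref{Theorem 1.2}(1) every $T\in(\tau_1,\gamma(y_0)]$ is attained, since $t^*(N^*(T))=T$, so the range of $t^*$ contains $(\tau_1,\gamma(y_0)]$; together with $\tau_1<t^*(M)\le\gamma(y_0)$ (the strict lower bound being precisely what (C1) furnishes at the first impulse) the range equals $(\tau_1,\gamma(y_0)]$. Were $t^*$ discontinuous at some $M_0$, monotonicity would force $t^*(M_0^-)>t^*(M_0^+)$, and the nonempty open gap $(t^*(M_0^+),t^*(M_0^-))\subset(\tau_1,\gamma(y_0)]$ would contain a value omitted by $t^*$, contradicting surjectivity. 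Hence $t^*$ is continuous.

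The remaining structure is then formal. Since $t^*$ is non-increasing, each level set $\{M:t^*(M)=\tau_k\}$ is convex, hence an interval, and continuity makes it closed, so it equals $[M_k^{\inf},M_k^{\sup}]$ with $t^*\equiv\tau_k$ there. For strict monotonicity off these intervals, take $M_1<M_2$ in $[0,\infty)\setminus\bigcup_{k=2}^{k_0}[M_k^{\inf},M_k^{\sup}]$ and suppose $t^*(M_1)=t^*(M_2)=:c$; then $t^*\equiv c$ on $[M_1,M_2]$. The constant $c$ cannot be an impulse instant, for otherwise $[M_1,M_2]$ would sit inside some $[M_k^{\inf},M_k^{\sup}]$, so $c\in(\tau_k,\tau_{k+1})$ for some $1\le k\le k_0$, and Theorem \ref{Theorem 1.2}(2)(i) gives $N^*(c)=M$ for every $M\in[M_1,M_2]$, which is absurd since the left side is a single number. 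Thus $t^*$ is strictly decreasing on the complement.

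I expect the genuine obstacle to be continuity at the impulse instants $\tau_k$, exactly where the state $y(\cdot;y_0,u)$ is discontinuous in time and optimal controls need not be unique. The monotone--interval--range argument avoids any direct time estimate, but it rests entirely on the surjectivity $t^*(N^*(T))=T$, whose proof at an impulse instant is the delicate case (2)(ii) of Theorem \ref{Theorem 1.2}, requiring passage to the optimal control of minimal norm. This is also where the new phenomenon sits: the flat interval $[M_k^{\inf},M_k^{\sup}]$ is nondegenerate precisely when the norm $N^*(\tau_k)=M_k^{\inf}$ sufficient to reach $B_r(0)$ using the impulse at $\tau_k$ is strictly below the budget $M_k^{\sup}$ that is still too small to reach $B_r(0)$ at any earlier instant --- the multi-impulse effect responsible for the failure of strict monotonicity.
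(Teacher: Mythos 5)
Your proof is correct, and for the central claim (continuity) it takes a genuinely different, and in fact leaner, route than the paper. The paper proceeds piecewise: it first proves Lemma \ref{lemma 4.1} (identifying $M_k^{\inf}=N^*(\tau_k)$ and $M_k^{\sup}=\lim_{t\to\tau_k^-}N^*(t)$) and Lemma \ref{lemma 4.2} (the plateau property), decomposes $[0,+\infty)$ into the plateaus $[M_k^{\inf},M_k^{\sup}]$ and the gaps $(M_k^{\sup},M_{k-1}^{\inf}]$, and on each gap exhibits $t^*(\cdot)$ as the inverse of $N^*(\cdot)$ restricted to $[\tau_{k-1},\tau_k)$, so that continuity and strict monotonicity are inherited from Propositions \ref{Proposition 3.1} and \ref{Proposition 3.2}; continuity on all of $[0,+\infty)$ then follows by gluing. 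You instead invoke only monotonicity plus surjectivity of $t^*$ onto $(\tau_1,\gamma(y_0)]$ (which is exactly Theorem \ref{Theorem 1.2}(1) combined with the existence result of Lemma \ref{lemma 3.2}), and conclude by the standard fact that a monotone function whose range is an interval has no jumps; the plateau structure and strict monotonicity off the plateaus then follow from level-set convexity and Theorem \ref{Theorem 1.2}(2)(i), respectively. Both arguments rest on the same key input $t^*(N^*(T))=T$, including its delicate case at impulse instants, but yours bypasses Lemma \ref{lemma 4.1}, Lemma \ref{lemma 4.2} and the one-sided continuity analysis of $N^*(\cdot)$ in Proposition \ref{Proposition 3.2} entirely, at the price of not producing the explicit endpoint formulas $M_k^{\inf}=N^*(\tau_k)$ and $M_k^{\sup}=\lim_{t\to\tau_k^-}N^*(t)$, which the paper needs anyway for the concrete example in Remark \ref{Remark 3.5}. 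Two small points you should make explicit if you write this up: the strict bound $t^*(M)>\tau_1$ for every finite $M$ requires a short weak-compactness argument from (C1) (or the attainment of the infimum in Lemma \ref{Theorem 2.1}), and the case $c=\gamma(y_0)$ in your strict-monotonicity step should be noted to fall under Theorem \ref{Theorem 1.2}(2)(i) when $\gamma(y_0)$ is not an impulse instant.
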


Several notes are given in order:
\begin{itemize}
\item This plateau phenomenon for $t^*(\cdot)$ (see Figure 2 in Section 4) is, to our knowledge, novel and absent in both continuous-time and single-impulse control problems.
At the same time, the minimal norm function $N^*(\cdot)$ is also continuous and strictly decreasing over $(\tau_k,\tau_{k+1})$ for each $1\leq k\leq k_0-1$.
However,  it may be discontinuous at the impulse instants, which also appears to be new in  norm optimal control problems.
See Remark \ref{Remark 3.5} for more details.

\item The idea of ``a time optimal control with minimal norm" is borrowed from  \cite{37},
 where it was used  to study time optimal sampled-data control and its approximation error relative to the continuous-time case.
 This concept is crucial in the derivation of Theorem \ref{Theorem 1.2} and Theorem \ref{Theorem 4.4}.

\item
The results of this paper are derived under  condition (C1).
  Analogous conclusions under (C2) are presented in Remark \ref{Remark 3.6}.

\end{itemize}

\par The rest of the paper is organized as follows.
Section 2 establishes fundamental properties of problem $({\rm TP})_M$, including
the existence,  maximum principle,  bang-bang property and  uniqueness  of the optimal controls with minimal norm.
In Section 3, we first  examine key properties of the minimal norm function $N^*(\cdot)$
 and subsequently  prove Theorem \ref{Theorem 1.2}. Finally,
 Section 4 is devoted to the proof of Theorem \ref{Theorem 4.4} and further analysis of $N^*(\cdot)$.

\section{Fundamental properties of the problem \texorpdfstring{$(\mathrm{TP})_M$}{(TP)\_M}}
The existence of optimal controls---including those with  minimal norm---for problem $({\rm TP})_M$ is first present
in Lemma \ref{Theorem 2.1}. The proof follows arguments similar to those in Lemma 2.1 of \cite{09}; for completeness, we include it in the Appendix.
 \begin{lemma} \label{Theorem 2.1}~
 Let $M\geq 0$.   The problem $({\rm TP})_M$ admits an optimal control.
 Moreover, an optimal control with minimal norm exists for  problem $({\rm TP})_M$.
\end{lemma}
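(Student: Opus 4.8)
The plan is to run the standard direct method: extract a minimizing sequence in time, use weak compactness of bounded sets in $L^2(\Omega)$ to produce a candidate control, and pass to the limit in the state using the weak closedness of the target ball $B_r(0)$. The only genuine care needed is in the limit of the state, both because the optimal time $t^*(M)$ and the controls vary simultaneously along the sequence and because $t^*(M)$ may coincide with an impulse instant, where the state is discontinuous.

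First I would reduce to finitely many controls. Write $t^*:=t^*(M)$ and let $m$ be the largest integer with $\tau_m\le t^*$; since $t^*\le\gamma(y_0)<\tau_{k_0+1}$, we have $m\le k_0<\infty$. By definition of the infimum there is a sequence $t_n\downarrow t^*$ (with $t_n\ge t^*$) and controls $u^n\in\mathcal U_M$ with $y(t_n;y_0,u^n)\in B_r(0)$. Components $u_j^n$ with $\tau_j>t_n$ do not affect $y(t_n;y_0,u^n)$, so I may set them to $0$; for $n$ large the active indices are exactly $1,\dots,m$ (using that $t^*$ lies in the half-open window $[\tau_m,\tau_{m+1})$, so the count of impulse instants at or before $t_n$ stabilizes at $m$). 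Thus each $u^n$ is described by $(u_1^n,\dots,u_m^n)$ with $\|u_j^n\|\le M$.

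Second, extract limits and pass to the limit in the state. Since $L^2(\Omega)$ is a Hilbert space, after passing to a common subsequence the bounded sequences $(u_j^n)_n$ admit weak limits $u_j^n\rightharpoonup u_j^*$ for each $1\le j\le m$; weak lower semicontinuity of the norm gives $\|u_j^*\|\le M$, so $u^*:=(u_1^*,\dots,u_m^*,0,0,\dots)\in\mathcal U_M$. To pass to the limit in
$$y(t_n;y_0,u^n)=e^{\Delta t_n}y_0+\sum_{j=1}^m e^{\Delta(t_n-\tau_j)}\chi_{\omega_j}u_j^n,$$
I would test against an arbitrary $\phi\in L^2(\Omega)$ and use self-adjointness of $e^{\Delta s}$ to move the operator onto $\phi$: $\langle e^{\Delta(t_n-\tau_j)}\chi_{\omega_j}u_j^n,\phi\rangle=\langle \chi_{\omega_j}u_j^n, e^{\Delta(t_n-\tau_j)}\phi\rangle$. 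Strong continuity of the semigroup gives $e^{\Delta(t_n-\tau_j)}\phi\to e^{\Delta(t^*-\tau_j)}\phi$ strongly (note $t^*-\tau_j\ge0$), while $\chi_{\omega_j}u_j^n\rightharpoonup\chi_{\omega_j}u_j^*$ weakly; pairing a weakly convergent sequence with a strongly convergent one passes to the limit. Together with $e^{\Delta t_n}y_0\to e^{\Delta t^*}y_0$ (here $t^*\ge\tau_1>0$), this yields $y(t_n;y_0,u^n)\rightharpoonup y(t^*;y_0,u^*)$. Since $B_r(0)$ is closed and convex, hence weakly closed, the limit lies in $B_r(0)$; as $u_j^*=0$ and $\tau_j>t^*$ for $j>m$, the control $u^*$ is optimal, proving the first assertion. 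The same formula also represents the right-continuous state value at $t^*$ when $t^*=\tau_m$, so the argument covers the case where the optimal time coincides with an impulse instant.

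For the minimal-norm statement I would minimize $\|\cdot\|_\infty$ over the (now nonempty) set $\mathcal O$ of optimal controls, all of which satisfy $y(t^*;y_0,v)\in B_r(0)$ at the fixed time $t^*$ and are supported on indices $1,\dots,m$. Taking $u^n\in\mathcal O$ with $\|u^n\|_\infty\to N_0:=\inf_{v\in\mathcal O}\|v\|_\infty$ and repeating the weak-compactness extraction (now with the operators $e^{\Delta(t^*-\tau_j)}$ fixed, so the limit is immediate) produces $\tilde u\in\mathcal O$. Weak lower semicontinuity gives $\|\tilde u_j\|\le\liminf_n\|u_j^n\|\le\liminf_n\|u^n\|_\infty=N_0$ for each $j$, whence $\|\tilde u\|_\infty=\max_{1\le j\le m}\|\tilde u_j\|\le N_0$; combined with $\tilde u\in\mathcal O$ this forces $\|\tilde u\|_\infty=N_0$, so $\tilde u$ is an optimal control of minimal norm. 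The main obstacle is the joint passage to the limit in the state in the first part, coordinating $t_n\to t^*$ with the weak convergence of the controls and correctly handling the discontinuity of the state at an impulse instant, whereas the minimal-norm step is a routine repetition once the target is known to be reached at the fixed time $t^*$.
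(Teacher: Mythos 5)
Your proposal is correct and follows essentially the same route as the paper's Appendix proof: a minimizing sequence of times $T_n\downarrow t^*(M)$ within the window $[\tau_k,\tau_{k+1})$ containing $t^*(M)$, weak compactness of the finitely many active control components, passage to the limit in the state (the paper splits $y^n(T_n)-\widehat y(t^*(M))$ into a strong part coming from $T_n\to t^*(M)$ and a weak part coming from $u^n_j\rightharpoonup \widehat u_j$, which is just a repackaging of your duality argument), and weak closedness of $B_r(0)$; the minimal-norm control is likewise obtained by a second weak-compactness extraction over the set of optimal controls. No gaps.
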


The following lemma, which pertains to problem $({\rm TP})_M$ for the case where $t^*(M)$ does not coincide with any impulse instant, is quoted from \cite{Huang}.
 \begin{lemma} \label{lemma 2.2}
 Let $M\geq0$ and  assume that $t^*(M)\in(\tau_k,\tau_{k+1})$ for some $1\leq k \leq k_0$.
  Let $u^*=\left(u_j^*\right)_{j\in\mathbb N^+}$ be an optimal control for problem $({\rm TP})_M$.
Then, $\|u_j^*\|=M$ for each $j=1,2,...,k$. Moreover, the optimal control is unique.
 \end{lemma}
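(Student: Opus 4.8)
The plan is to obtain the bang-bang relation from a maximum principle produced by separating the reachable set from the target ball, and then to deduce uniqueness from the strict convexity of the $L^2$-ball. Throughout write $T:=t^*(M)$ and
$$y^*:=y(T;y_0,u^*)=e^{\Delta T}y_0+\sum_{j=1}^k e^{\Delta(T-\tau_j)}\chi_{\omega_j}u_j^*,$$
noting that $u_j^*=0$ for $j>k$ since $\tau_j>T$ for such $j$. First I would record two consequences of minimality. Because $T\in(\tau_k,\tau_{k+1})$, no impulse occurs on this interval and $t\mapsto y(t;y_0,u^*)$ is continuous there; hence if $\|y^*\|<r$ the state would already lie in $B_r(0)$ at some $t<T$, contradicting optimality of $T$. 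Therefore $\|y^*\|=r$. Applying the same continuity argument to an arbitrary admissible control shows that the reachable set
$$\mathcal{R}(T):=\Big\{e^{\Delta T}y_0+\sum_{j=1}^k e^{\Delta(T-\tau_j)}\chi_{\omega_j}u_j:\ \|u_j\|\le M,\ 1\le j\le k\Big\}$$
cannot meet $\mathrm{int}\,B_r(0)$: a point of this open ball reached at time $T$ would, by continuity, remain inside $B_r(0)$ for $t$ slightly below $T$, again contradicting minimality.

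Next, since $\mathcal{R}(T)$ is convex and disjoint from the nonempty open convex set $\mathrm{int}\,B_r(0)$, the geometric Hahn--Banach theorem yields a nonzero $\phi\in L^2(\Omega)$ with $\langle\phi,w\rangle\ge\langle\phi,z\rangle$ for all $w\in\mathcal{R}(T)$ and $z\in B_r(0)$. As $y^*$ lies in both sets, $\langle\phi,y^*\rangle=\max_{z\in B_r(0)}\langle\phi,z\rangle=r\|\phi\|$, and the equality case of Cauchy--Schwarz forces $\phi=(\|\phi\|/r)\,y^*$, so in particular $\phi\ne0$. Rewriting $\langle\phi,w-y^*\rangle\ge0$ in terms of the controls and using the self-adjointness of $e^{\Delta s}$ and of $\chi_{\omega_j}$ gives
$$\sum_{j=1}^k\big\langle\chi_{\omega_j}e^{\Delta(T-\tau_j)}\phi,\ u_j-u_j^*\big\rangle\ge0\qquad\text{for all }\|u_j\|\le M.$$
As the $u_j$ vary independently, this decouples into $\langle\varphi_j,u_j-u_j^*\rangle\ge0$ for each $j$, where $\varphi_j:=\chi_{\omega_j}e^{\Delta(T-\tau_j)}\phi$; equivalently $u_j^*$ minimizes the linear functional $u_j\mapsto\langle\varphi_j,u_j\rangle$ over the ball of radius $M$, whence $u_j^*=-M\varphi_j/\|\varphi_j\|$ provided $\varphi_j\ne0$.

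The crux, and the step I expect to be the main obstacle, is showing $\varphi_j\ne0$ for every $j\le k$, as this is precisely what upgrades the extremal relation to $\|u_j^*\|=M$. Here I would invoke the unique continuation property of the heat semigroup. Since $\phi\ne0$ and the semigroup is injective, $e^{\Delta(T-\tau_j)}\phi\ne0$ for the positive time $T-\tau_j>0$ (recall $\tau_j\le\tau_k<T$); moreover $e^{\Delta s}\phi$ is real-analytic in the spatial variable on the connected domain $\Omega$ for each $s>0$, so it cannot vanish on the nonempty open set $\omega_j$. Consequently $\varphi_j=\chi_{\omega_j}e^{\Delta(T-\tau_j)}\phi\ne0$, giving $\|u_j^*\|=M$ for every $j=1,\dots,k$.

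Finally, for uniqueness I would argue by strict convexity. If $u^*$ and $\bar u$ are two optimal controls with states $y^*$ and $\bar y$, both states have norm $r$ by the first step, and the midpoint control $(u^*+\bar u)/2\in\mathcal{U}_M$ reaches $(y^*+\bar y)/2$ at time $T$. Were $y^*\ne\bar y$, strict convexity of $\|\cdot\|$ would force $\|(y^*+\bar y)/2\|<r$, placing a point of $\mathcal{R}(T)$ in $\mathrm{int}\,B_r(0)$, which is impossible. Hence $y^*=\bar y$, so the supporting functional $\phi$ (determined up to a positive factor through $\phi\parallel y^*$) and therefore each $\varphi_j$ are common to both controls; the formula $u_j^*=-M\varphi_j/\|\varphi_j\|$ then gives $u^*=\bar u$, completing the uniqueness claim.
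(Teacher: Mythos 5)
Your proof is correct, and since the paper only quotes this lemma from the reference [Huang] without reproducing a proof, the natural benchmark is the paper's own proof of Theorem~\ref{Theorem 1.1} for the companion case $t^*(M)=\tau_k$: your argument follows exactly that template (attainable set touches $B_r(0)$ only on its boundary, geometric Hahn--Banach separation, decoupling over the independent impulses, unique continuation of the heat semigroup to rule out $\varphi_j=0$, and strict convexity of the $L^2$-ball for uniqueness). The only substantive difference is that in the interior case $T\in(\tau_k,\tau_{k+1})$ you can exploit continuity of $t\mapsto y(t)$ near $T$ to show the reachable set misses the open ball, whereas the paper's boundary case must instead use minimality of the control norm; both reductions are handled correctly.
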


Now we present   Pontryagin's maximum principle for
 $({\rm TP})_M$ for the  case where the optimal time $t^*(M)$
 coincides with an impulse instant.

\begin{theorem}\label{Theorem 1.1}~
Let $M\geq0$ and assume that $t^*(M)=\tau_k$ for some $1\leq k\leq k_0$.
Let $M_1$ be  the minimal norm among all optimal controls.
Then  $\widetilde{u}=\big(\widetilde{u}_j\big)_{j\in\mathbb{N}^+}$ is
an optimal control with minimal norm for problem $({\rm TP})_M$ if and only if  there exists a function
\begin{center}
$\varphi\in L^2\left(0,\tau_k;H_0^1\left(\Omega\right)\right)\bigcap \,W^{1,2}\left(0,\tau_k;H^{-1}\left(\Omega\right)\right)$
\end{center}
so that
\begin{equation}\label{2.5}
\left\langle \widetilde{u}_j,\chi_{\omega_j}\varphi\big(\tau_j\big)\right\rangle=\max\limits_{\|u_j\|\leq M_1}
\left\langle u_j,\chi_{\omega_j}\varphi\big(\tau_j\big)\right\rangle \textrm{ \,for each\, }1\leq j\leq k
\end{equation}
and
\begin{equation}\label{2.4}
\left\{\begin{array}{lll}
	\partial_t \varphi+\Delta\varphi=0 & \text { in } \Omega \times(0,\tau_k), \\
	\varphi=0 & \text { on } \partial \Omega \times(0,\tau_k) , \\
	\varphi(\tau_k)=-y(\tau_k; y_0, \widetilde u ) & \text { in } \Omega.
\end{array}\right.
\end{equation}

\end{theorem}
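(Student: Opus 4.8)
The plan is to recast the constraint at the single instant $\tau_k$ as a metric projection in $L^2(\Omega)$. Since $t^*(M)=\tau_k$, every optimal control drives the state into $B_r(0)$ exactly at $\tau_k$, and the reachable states at $\tau_k$ produced by controls of norm at most $M_1$ form the bounded convex set
\[
\mathcal{R}:=\Big\{e^{\Delta\tau_k}y_0+\sum_{j=1}^{k}e^{\Delta(\tau_k-\tau_j)}\chi_{\omega_j}u_j:\ \|u_j\|\le M_1,\ 1\le j\le k\Big\}.
\]
Writing $\widetilde y:=y(\tau_k;y_0,\widetilde u)$, the core claim behind both implications is that $\widetilde y$ is the metric projection of the origin onto $\overline{\mathcal R}$, equivalently the minimal-norm point of $\mathcal R$, with $\|\widetilde y\|=r$. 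Granting this, the adjoint $\varphi$ of \eqref{2.4} is just the back-propagation of $-\widetilde y$: since the heat semigroup is self-adjoint, the unique solution of \eqref{2.4} is $\varphi(t)=-e^{\Delta(\tau_k-t)}\widetilde y$, so that $\chi_{\omega_j}\varphi(\tau_j)=-\chi_{\omega_j}e^{\Delta(\tau_k-\tau_j)}\widetilde y$; its $L^2(0,\tau_k;H_0^1)\cap W^{1,2}(0,\tau_k;H^{-1})$ regularity is standard parabolic regularity with terminal datum $-\widetilde y\in L^2(\Omega)$.

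For necessity, let $\widetilde u$ be an optimal control with minimal norm and set $\widetilde y=y(\tau_k;y_0,\widetilde u)\in B_r(0)$. A scaling argument shows no point of $\mathcal R$ has norm below $r$: if some $u$ with $\|u\|_\infty\le M_1$ produced a state $y$ with $\|y\|<r$, then the rescaled controls $\lambda u$ give states $(1-\lambda)e^{\Delta\tau_k}y_0+\lambda y\to y$, which lie in $B_r(0)$ for $\lambda<1$ close to $1$ while $\|\lambda u\|_\infty=\lambda M_1<M_1$ — an optimal control of strictly smaller norm, contradicting minimality of $M_1$. Applying this to $\widetilde y$ forces $\|\widetilde y\|=r$, so $\widetilde y$ is the minimal-norm point of $\mathcal R$, hence of $\overline{\mathcal R}$, i.e.\ $\widetilde y=P_{\overline{\mathcal R}}(0)$. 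The projection inequality $\langle\widetilde y,\,y-\widetilde y\rangle\ge0$ for all $y\in\overline{\mathcal R}$, applied to $y-\widetilde y=\sum_j e^{\Delta(\tau_k-\tau_j)}\chi_{\omega_j}(u_j-\widetilde u_j)$ and rewritten with the self-adjointness identity above, becomes
\[
\sum_{j=1}^{k}\big\langle \widetilde u_j,\chi_{\omega_j}\varphi(\tau_j)\big\rangle\ \ge\ \sum_{j=1}^{k}\big\langle u_j,\chi_{\omega_j}\varphi(\tau_j)\big\rangle\qquad\text{for all }\|u_j\|\le M_1 .
\]
Since both the constraints and the summands decouple over $j$, this global inequality separates into the pointwise maximum condition \eqref{2.5}.

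For sufficiency I would reverse the computation: summing \eqref{2.5} and inserting $\chi_{\omega_j}\varphi(\tau_j)=-\chi_{\omega_j}e^{\Delta(\tau_k-\tau_j)}\widetilde y$ reproduces the projection inequality $\langle\widetilde y,\,y-\widetilde y\rangle\ge0$ for all $y\in\mathcal R$ (hence on $\overline{\mathcal R}$ by continuity), so by uniqueness of the projection $\widetilde y=P_{\overline{\mathcal R}}(0)$. The structural equality $\mathrm{dist}(0,\overline{\mathcal R})=r$ obtained in the necessity step (a property of the problem, independent of $\widetilde u$) then gives $\|\widetilde y\|=r\le r$, so $\widetilde u$ is admissible and, reaching $B_r(0)$ at $\tau_k=t^*(M)$, optimal; moreover \eqref{2.5} forces $\|\widetilde u_j\|\le M_1$, with equality wherever $\chi_{\omega_j}\varphi(\tau_j)\ne0$, and at least one such $j$ exists because $\|\widetilde y\|=r>0$, whence $\|\widetilde u\|_\infty=M_1$ and $\widetilde u$ is of minimal norm. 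The accompanying bang-bang property and uniqueness then come from the nonvanishing of $\chi_{\omega_j}\varphi(\tau_j)=-\chi_{\omega_j}e^{\Delta(\tau_k-\tau_j)}\widetilde y$, which for $j<k$ follows from $\widetilde y\ne0$ together with unique continuation and backward uniqueness for the heat equation, combined with uniqueness of the metric projection.

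The main obstacle I anticipate is the lack of compactness of $\mathcal R$: the last impulse enters through $e^{\Delta(\tau_k-\tau_k)}=I$, which is not smoothing, so $\mathcal R$ need not be closed and one cannot use compactness to locate the minimal-norm point. My plan circumvents this by working with $\overline{\mathcal R}$ and using that an optimal control with minimal norm already exists (Lemma \ref{Theorem 2.1}), so the projection is attained at the genuine state $\widetilde y\in\mathcal R$, while uniqueness of the projection holds purely from Hilbert-space geometry. The same term $j=k$ is the delicate one for the nonvanishing of $\chi_{\omega_k}\varphi(\tau_k)=-\chi_{\omega_k}\widetilde y$, where unique continuation does not apply directly and one must argue separately, using $\|\widetilde y\|=r>0$, that the last impulse is genuinely active in the minimal-norm control.
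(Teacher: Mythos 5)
Your proof of the forward implication (optimal control with minimal norm $\Rightarrow$ existence of $\varphi$) is correct and is essentially the paper's argument in different clothing. The paper separates the convex attainable set $\mathcal{A}_{\tau_k}$ from $B_r(0)$ by the geometric Hahn--Banach theorem and then needs an extra step to show the separating functional $\zeta_0$ is a negative multiple of $y(\tau_k;y_0,\widetilde u)$; because the target is a ball centred at the origin, your metric-projection formulation delivers the identification $\varphi(\tau_k)=-\widetilde y$ for free, which is a genuine (if modest) streamlining. Your scaling contradiction (no reachable state under $\mathcal U_{M_1}$ has norm below $r$) plays exactly the role of the paper's Steps 1--2, which use the same $(1-\varepsilon)$-rescaling together with strict convexity of the norm. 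One remark: your concern that $\mathcal R$ may fail to be closed is unfounded --- it is the image of a finite product of closed balls (weakly compact) under a weakly continuous affine map, hence weakly compact and, being convex, norm-closed; the paper asserts this closedness without comment.

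The converse direction, however, contains a concrete error. You claim that \eqref{2.5} ``forces $\|\widetilde u_j\|\le M_1$, with equality wherever $\chi_{\omega_j}\varphi(\tau_j)\ne0$.'' Cauchy--Schwarz gives the opposite: from $\langle\widetilde u_j,\chi_{\omega_j}\varphi(\tau_j)\rangle=M_1\|\chi_{\omega_j}\varphi(\tau_j)\|$ one deduces $\|\widetilde u_j\|\ge M_1$ whenever $\chi_{\omega_j}\varphi(\tau_j)\ne0$, and no bound at all when it vanishes, since $\widetilde u_j$ may be perturbed by any vector orthogonal to $\chi_{\omega_j}\varphi(\tau_j)$ without disturbing \eqref{2.5}. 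Consequently you cannot conclude $\widetilde y\in\overline{\mathcal R}$, and without membership in the set the variational inequality $\langle\widetilde y,\,y-\widetilde y\rangle\ge0$ for $y\in\mathcal R$ does not identify $\widetilde y$ as the projection of $0$ onto $\overline{\mathcal R}$; the subsequent conclusions ($\|\widetilde y\|=r$, admissibility, minimality of the norm) all hang on that identification. To close this direction you must either take $\widetilde u\in\mathcal U_{M_1}$ as part of the hypothesis or extract the bound from the coupling between \eqref{2.5} and the terminal condition $\varphi(\tau_k)=-y(\tau_k;y_0,\widetilde u)$, neither of which you do. (The paper does not write this direction out --- it refers it to \cite{Huang} --- so the only direction against which you can be checked line by line is the one you got right.)
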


\begin{proof}
The necessity  follows from arguments analogous to  Theorem 2.3 in \cite{Huang}. We now  prove the sufficiency.
 Let $\widetilde{u}=\big(\widetilde{u}_j\big)_{j\in\mathbb{N}^+}$ be an optimal control with minimal norm for the problem $({\rm TP})_M$. Then $\max\limits_{1\leq j\leq k}\|\widetilde{u}_j\|=M_1\leq M$.  We set
\begin{equation*}
\mathcal{A}_{\tau_k}:=\Big\{e^{\Delta \tau_k}y_0+\sum\limits_{j=1}^ke^{\Delta(\tau_k-\tau_j)}\chi_{\omega_j}u_j:u=\big(u_j\big)_{j\in\mathbb{N}^+}\in\mathcal{U}_{M_1}\Big\}.
\end{equation*}
Obviously, $\mathcal{A}_{\tau_k}$ is a convex and closed subset of $L^2(\Omega)$. The rest of the proof will be carried out by four steps as follows.

\par {\itshape Step 1.}~We illustrate  that
\begin{equation}\label{0801-2}
y\big(\tau_k;y_0,\widetilde u\big)\in\mathcal{A}_{\tau_k}\cap \partial B_r(0).
\end{equation}

First, we  notice that $y\big(\tau_k;y_0,\widetilde u\big)\in\mathcal{A}_{\tau_k}\cap B_r(0).$
Seeking a contradiction,   suppose that $\|y(\tau_k;y_0,\widetilde{u})\|<r$.
Then we can find $\varepsilon>0$ small enough so that $\|y(\tau_k;y_0,\widetilde{u}-\varepsilon\widetilde{u})\|\leq r$.
This shows that $(1-\varepsilon)\widetilde u$ is an optimal control for $({\rm TP})_M$,
which contradicts  the fact that $\widetilde{u}$ is an optimal control with minimal norm. Thus \eqref{0801-2} is true.

\par {\itshape Step 2.}~We show that
\begin{equation}\label{5.1}
\mathcal{A}_{\tau_k}\cap B_r(0)=\{y(\tau_k;y_0,\widetilde{u})\}.
\end{equation}

 It suffices to show that $\mathcal{A}_{\tau_k}\cap B_r(0)$ has a unique element.
Seeking a contradiction,   suppose that $\mathcal{A}_{\tau_k}\cap B_r(0)$ contains another element (which is different from $y(\tau_k;y_0,\widetilde{u})$), denoted by
\begin{equation}\label{5.2}
\widehat{y}\big(\tau_k\big):=y\big(\tau_k;y_0,\widehat{u}\big)=e^{\Delta \tau_k}y_0+\sum\limits_{j=1}^ke^{\Delta(\tau_k-\tau_j)}\chi_{\omega_j}\widehat{u}_j
\end{equation}
with some $\widehat{u}=\big(\widehat{u}_j\big)_{j\in\mathbb{N}^+}\in\mathcal{U}_{M_1}$. Set $v:=(\widetilde{u}+\widehat{u})/2$. Clearly,
\begin{equation}\label{5.3}
y\big(\tau_k;y_0,v\big)=\big[y\big(\tau_k;y_0,\widetilde{u}\big)+y\big(\tau_k;y_0,\widehat{u}\big)\big]/2.
\end{equation}
Since $L^2(\Omega)$ is strictly convex, by \eqref{5.2} and the equality in \eqref{5.3},
we can find $v\in \mathcal U_{M_1}$ so that
\begin{equation*}
\big\|y\big(\tau_k;y_0,v\big)\big\|<r.
\end{equation*}
Then we can find $\varepsilon>0$ small enough so that $\|y(\tau_k;y_0,v-\varepsilon v)\|\leq r$. It is easy to see that
\begin{equation*}
\|v-\varepsilon v\|_\infty=(1-\varepsilon)\|v\|_\infty< M_1,
\end{equation*}
which leads to a contradiction with the fact that $\widetilde{u}$ is an optimal control with minimal norm for problem $({\rm TP})_M$. Thus, \eqref{5.1} is verified.
\par {\itshape Step 3.}~We prove \eqref{2.5}.

By \eqref{0801-2} and  \eqref{5.1},  we have  $\mathcal{A}_{\tau_k}\cap$ int\,$B_r(0)=\varnothing$.
Hence, according to the geometric version of the Hahn-Banach theorem (see, for instance, \cite{04}), there exist $\zeta_0\in L^2(\Omega)$ with $\zeta_0\neq 0$ and a constant $\iota$ so that
\begin{equation}\label{5.5}
\langle\zeta_0,z_2\rangle\leq \iota \leq \langle\zeta_0,z_1\rangle\,\,\text{ for all }\,\,z_1\in B_r(0)\,\,\text{ and }\,\,z_2\in\mathcal{A}_{\tau_k}.
\end{equation}
The inequality in above yields that
\begin{equation*}\label{5.7}
\left\langle\zeta_0,y\left(\tau_k;y_0,u\right)\right\rangle\leq \left\langle\zeta_0, y\left(\tau_k;y_0,\widetilde{u}\right)\right\rangle\,\,\text{ for all }\,\,u\in\mathcal{U}_{M_1},
\end{equation*}
i.e.,
\begin{equation}\label{100}
\sum\limits_{j=1}^k\left\langle \zeta_0,e^{\Delta(\tau_k-\tau_j)}\chi_{\omega_j}u_j\right\rangle\leq \sum\limits_{j=1}^k\left\langle \zeta_0,e^{\Delta(\tau_k-\tau_j)}\chi_{\omega_j}\widetilde{u}_j\right\rangle,\,\,\text{ for all }\,\,u=\left(u_j\right)_{j\in\mathbb{N}^+}\in\mathcal{U}_{M_1}.
\end{equation}

Let $\varphi$ be the unique solution to
\begin{equation}\label{0906-1}
\left\{\begin{array}{lll}
	\partial_t \varphi+\Delta\varphi=0 & \text { in } \Omega \times(0,\tau_k), \\
	\varphi=0 & \text { on } \partial \Omega \times(0,\tau_k) , \\
	\varphi(\tau_k)=\zeta_0 & \text { in } \Omega.
\end{array}\right.
\end{equation}
 From \eqref{100} and \eqref{0906-1}, we get that
\begin{equation*}\label{0906-3}
\sum\limits_{j=1}^k\left\langle \varphi(\tau_j),\chi_{\omega_j}u_j\right\rangle\leq \sum\limits_{j=1}^k\left\langle \varphi(\tau_j),\chi_{\omega_j}\widetilde{u}_j\right\rangle,\,\,\text{ for all }\,\,u=\left(u_j\right)_{j\in\mathbb{N}^+}\in\mathcal{U}_{M_1},
\end{equation*}
This proves \eqref{2.5}. Furthermore, combining this result with the unique continuation of the heat equation and the fact that $\zeta_0\neq 0$, shows that
\begin{equation}\label{0802-2}
\widetilde u_j=M_1\frac{\chi_{\omega_j}\varphi(\tau_j)}{\|\chi_{\omega_j}\varphi(\tau_j)\|}\,\,\text{ for each }\,\,1\leq j\leq k.
\end{equation}

{\it Step 4.} We conclude the proof.

By \eqref{5.5} we have that
\begin{equation*}
\left\langle\zeta_0,z_1-y\left(\tau_k;y_0,\widetilde u\right)\right\rangle\geq0\,\,\text{ for all }\,\,z_1\in B_r(0).
\end{equation*}
Then
\begin{equation*}
\left\langle \zeta_0,y\left(\tau_k;y_0,\widetilde u\right)\right\rangle =\min\limits_{z_1\in B_r(0)}\left\langle\zeta_0,z_1\right\rangle=-r\|\zeta_0\|.
\end{equation*}
This shows that
\begin{equation}\label{0906-4}
y\big(\tau_k;y_0,\widetilde u\big)=c\zeta_0\,\,\text{ for some }\,\,c<0.
\end{equation}
By \eqref{0906-4} and \eqref{0802-2} we can choose $\zeta_0=-y\big(\tau_k;y_0,\widetilde u\big)$. This, together with \eqref{0906-1},
shows that \eqref{2.4}  holds.

\par This completes the proof.
\end{proof}

\begin{corollary} \label{Corollary 2.4}~Let $M\geq 0$ and assume that $t^*(M)=\tau_k$ for some $1\leq k\leq k_0$.
Let $\widetilde u=\left(\widetilde u_j\right)_{j\in\mathbb N^+}$ be an optimal control with the minimal norm $M_1$ for problem $({\rm TP})_M$.
Then $\|\widetilde u_j\|=M_1$ \,for each $j=1,2,...,k$. Moreover, the optimal control with minimal norm for problem $({\rm TP})_M$ is unique.
\end{corollary}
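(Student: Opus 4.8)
The plan is to deduce both assertions directly from Theorem \ref{Theorem 1.1} and from the facts already established inside its proof, since the genuinely analytic work (the Hahn--Banach separation and the unique continuation) has been carried out there. In outline, the norm equality is the ``bang-bang'' content of the maximum principle, while uniqueness is extracted from the singleton property of the attainable set at time $\tau_k$.

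First I would prove that $\|\widetilde u_j\|=M_1$ for each $1\le j\le k$. By Theorem \ref{Theorem 1.1}, the optimal control with minimal norm $\widetilde u$ comes with an adjoint state $\varphi$ solving \eqref{2.4} and satisfying the pointwise maximization \eqref{2.5}. For each fixed $j$, the right-hand side of \eqref{2.5} is the maximum over the ball $\{\|u_j\|\le M_1\}$ of the linear functional $u_j\mapsto\langle u_j,\chi_{\omega_j}\varphi(\tau_j)\rangle$; whenever $\chi_{\omega_j}\varphi(\tau_j)\neq 0$ this maximum is attained at the single boundary point
\[
\widetilde u_j=M_1\,\frac{\chi_{\omega_j}\varphi(\tau_j)}{\|\chi_{\omega_j}\varphi(\tau_j)\|},
\]
which is precisely \eqref{0802-2}. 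Taking norms gives $\|\widetilde u_j\|=M_1$. The non-vanishing of $\chi_{\omega_j}\varphi(\tau_j)$ is where unique continuation enters: by \eqref{0801-2} the terminal datum $\varphi(\tau_k)=-y(\tau_k;y_0,\widetilde u)$ has norm $r>0$, backward uniqueness keeps $\varphi(t)\neq 0$ throughout $[0,\tau_k]$, and the unique continuation property of the heat equation prevents $\varphi(\tau_j)$ from vanishing on the open set $\omega_j$.

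Next I would establish uniqueness. Let $\widetilde u$ and $\widehat u$ be two optimal controls with minimal norm. Both satisfy $\|\cdot\|_\infty=M_1$ and steer $y_0$ into $B_r(0)$ at time $\tau_k$, hence both lie in $\mathcal U_{M_1}$ and $y(\tau_k;y_0,\widetilde u),\,y(\tau_k;y_0,\widehat u)\in\mathcal A_{\tau_k}\cap B_r(0)$. The key point is that \eqref{5.1} shows this intersection is a single point, and its proof uses only $M_1$, not the particular control; therefore $y(\tau_k;y_0,\widehat u)=y(\tau_k;y_0,\widetilde u)$. Consequently the two adjoint problems \eqref{2.4} share the same terminal datum, so they have the same solution $\varphi$, and the explicit formula \eqref{0802-2} forces $\widetilde u_j=\widehat u_j$ for every $1\le j\le k$. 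For indices $j>k$ one has $\tau_j>\tau_k=t^*(M)$, so by the definition of an optimal control both $\widetilde u_j$ and $\widehat u_j$ vanish. Hence $\widetilde u=\widehat u$.

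The step I expect to require the most care is the uniqueness argument, specifically justifying that the singleton property \eqref{5.1} is intrinsic to $M_1$, and therefore applies simultaneously to \emph{every} minimal-norm optimal control rather than only to the one fixed in the proof of Theorem \ref{Theorem 1.1}. Once the common endpoint $y(\tau_k;y_0,\cdot)$ is identified, the remaining identification of the controls is completely forced by the bang-bang formula \eqref{0802-2}. The only genuinely analytic input, namely $\chi_{\omega_j}\varphi(\tau_j)\neq 0$ via unique continuation, is already available from the proof of Theorem \ref{Theorem 1.1}, so no new estimate is needed here.
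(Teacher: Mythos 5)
Your proof is correct, and the bang-bang half coincides with the paper's: the paper's proof of Corollary \ref{Corollary 2.4} simply reads $\|\widetilde u_j\|=M_1$ off the representation formula \eqref{0802-2} established in Step 3 of the proof of Theorem \ref{Theorem 1.1}, exactly as you do (including the unique-continuation justification that $\chi_{\omega_j}\varphi(\tau_j)\neq 0$, which is already embedded there). Where you genuinely diverge is the uniqueness part. The paper invokes the standard route: if $\widetilde u$ and $\widehat u$ are both minimal-norm optimal controls, so is their midpoint, hence the midpoint is also bang-bang with $\|(\widetilde u_j+\widehat u_j)/2\|=M_1=\|\widetilde u_j\|=\|\widehat u_j\|$, and the Parallelogram Law forces $\widetilde u_j=\widehat u_j$. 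You instead exploit the singleton property \eqref{5.1}: your observation that its proof only uses the minimality of $M_1$ (the contradiction there is with the existence of an admissible control of sup-norm strictly below $M_1$, not with any feature of the particular $\widetilde u$) is accurate, so all minimal-norm optimal controls share the terminal state $y(\tau_k;y_0,\cdot)$, hence the same adjoint datum in \eqref{2.4}, hence the same $\varphi$, and \eqref{0802-2} then pins down every component; the tail components $j>k$ vanish by definition. Both arguments ultimately rest on the strict convexity of $L^2(\Omega)$, but yours is more self-contained relative to the machinery already built in Theorem \ref{Theorem 1.1} (no separate verification that the midpoint is again a minimal-norm optimal control, and it identifies the adjoint as an invariant of the problem), while the paper's is the shorter, off-the-shelf argument from the single-impulse literature. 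One small caveat: your route leans on the normalization $\varphi(\tau_k)=-y(\tau_k;y_0,\widehat u)$ being available for \emph{every} minimal-norm optimal control $\widehat u$, which does follow from Step 4 of the paper's proof since \eqref{0802-2} is invariant under positive rescaling of $\zeta_0$, but is worth stating explicitly.
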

\begin{proof}
Equation \eqref{0802-2} implies that $\widetilde u$ is bang-bang,
 while its  uniqueness can be proved by a standard argument  based on the bang-bang property and the Parallelogram Law (see, e.g., \cite{09}).
\end{proof}

\par The bang-bang property of the optimal control for $({\rm TP})_M$  also implies  monotonicity of $t^*(\cdot)$, as stated in the following proposition.

\begin{proposition}\label{Proposition 2.6}~Let $0\leq M_2<M_1<+\infty$. Then $t^*(M_2)\geq t^*(M_1)$.
Furthermore, if  both $t^*(M_1)$ and $t^*(M_2)$  lie in the same interval $(\tau_k,\tau_{k+1})$ for some $1\leq k\leq k_0$, then the inequality is strict, i.e.,
\begin{equation}\label{1.1}
t^*(M_2)>t^*(M_1).
\end{equation}
\end{proposition}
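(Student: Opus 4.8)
The plan is to handle the two assertions separately: the non-strict monotonicity follows from a set-inclusion argument, while the strict inequality is obtained by contradiction using the uniqueness supplied by Lemma~\ref{lemma 2.2}. For the monotonicity, note that $M_2<M_1$ forces $\mathcal U_{M_2}\subseteq\mathcal U_{M_1}$, so any control steering $y_0$ into $B_r(0)$ by some time $t$ under the budget $M_2$ remains admissible under the budget $M_1$. Thus the set of feasible times for $({\rm TP})_{M_1}$ contains that for $({\rm TP})_{M_2}$, and passing to infima gives $t^*(M_1)\leq t^*(M_2)$.

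For the strict inequality I would argue by contradiction, assuming $t^*(M_1)=t^*(M_2)=:T$ with, by hypothesis, $T\in(\tau_k,\tau_{k+1})$. Let $u^*$ and $v^*$ be optimal controls for $({\rm TP})_{M_1}$ and $({\rm TP})_{M_2}$ respectively, which exist by Lemma~\ref{Theorem 2.1}. Since $T$ lies in the open interval $(\tau_k,\tau_{k+1})$, Lemma~\ref{lemma 2.2} applies to both problems and yields the bang-bang identities $\|u_j^*\|=M_1$ and $\|v_j^*\|=M_2$ for each $1\leq j\leq k$, together with the uniqueness of the optimal control for $({\rm TP})_{M_1}$.

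The decisive step is to recognize $v^*$ as an optimal control for $({\rm TP})_{M_1}$ as well: one has $v^*\in\mathcal U_{M_2}\subseteq\mathcal U_{M_1}$; the identity $T=t^*(M_1)$ with $y(T;y_0,v^*)\in B_r(0)$ shows that $v^*$ reaches the target precisely at the optimal time; and optimality of $v^*$ for $({\rm TP})_{M_2}$ forces $v_j^*=0$ whenever $\tau_j>t^*(M_2)=T=t^*(M_1)$. Hence $v^*$ meets every clause of the definition of optimality for $({\rm TP})_{M_1}$, so uniqueness gives $v^*=u^*$ and therefore $M_2=\|v_j^*\|=\|u_j^*\|=M_1$, contradicting $M_2<M_1$. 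Combined with the monotonicity already shown, this yields \eqref{1.1}.

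I expect the only delicate point to be this identification: one must verify each clause in the definition of an optimal control for the higher-budget problem, in particular that the tail components of $v^*$ vanish beyond the optimal time, before the uniqueness clause of Lemma~\ref{lemma 2.2} can be invoked. The remaining bang-bang contradiction is then immediate.
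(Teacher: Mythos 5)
Your proposal is correct and follows essentially the same route as the paper: the non-strict inequality from the inclusion $\mathcal U_{M_2}\subseteq\mathcal U_{M_1}$, and the strict inequality by observing that under $t^*(M_2)=t^*(M_1)$ the optimal control for $({\rm TP})_{M_2}$ is also optimal for $({\rm TP})_{M_1}$ and then invoking Lemma~\ref{lemma 2.2}. The only difference is cosmetic: the paper contradicts $\|u^2\|_\infty\leq M_2<M_1$ directly with the bang-bang identity $\|u^2\|_\infty=M_1$, whereas you take a slightly longer (but equally valid) detour through the uniqueness clause.
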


\begin{proof}For any $0\leq M_2<M_1<+\infty$, we have $\mathcal{U}_{M_2}\subset\mathcal{U}_{M_1}$. Then it follows that $t^*(M_2)\geq t^*(M_1)$.
\par Suppose that $t^*(M_1),t^*(M_2)\in(\tau_k,\tau_{k+1})$ for some $1\leq k\leq k_0$ and  \eqref{1.1} was not true. Then, we would have that
\begin{equation}\label{10.1}
t^*(M_2)=t^*(M_1).
\end{equation}
Denote the optimal control for $({\rm TP})_{M_2}$ by $u^2$. Then it satisfies that
\begin{equation}\label{1.2}
\big\|u^2\big\|_\infty\leq M_2\,\,\text{ and }\,\,y\big(t^*(M_2);y_0,u^2\big)\in B_r(0).
\end{equation}
By \eqref{10.1} and \eqref{1.2}, we get that
\begin{equation}\label{1.3}
\big\|u^2\big\|_\infty\leq M_2<M_1\,\,\text{and}\,\,y\big(t^*(M_1);y_0,u^2\big)\in B_r(0).
\end{equation}
This implies that $u^2$ is an optimal control for problem $({\rm TP})_{M_1}$.
Then by Lemma \ref{lemma 2.2}, we have $\|u^2\|_\infty=M_1$. This contradicts the first inequality in \eqref{1.3}.
Therefore \eqref{1.1} holds.
\end{proof}

\section{Equivalence Between \texorpdfstring{$({\rm TP})_M$}{TP\_M} and \texorpdfstring{$({\rm NP})_T$}{NP\_T}}
\subsection{Some properties on problem \texorpdfstring{$(\mathrm{NP})_T$}{(NP)\_T}}
First  we introduce the following lemma quoted from \cite{08}.
 \begin{lemma} \label{lemma 3.1}~The system \eqref{2.1} is approximately null controllable for any $T>\tau_1$.
 That is, for any $T>\tau_1$, $r>0$ and $y_0\in L^2(\Omega)$,
 there exists a control $u\in l^\infty(\mathbb N^+;L^2(\Omega))$ so that $y(T;y_0,u)\in B_r(0).$
\end{lemma}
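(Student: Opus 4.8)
The plan is to reduce the claim to the classical approximate controllability of the heat equation, using only the first impulse. Fix $T>\tau_1$ and set $\sigma:=T-\tau_1>0$. I would restrict attention to controls of the form $u=(u_1,0,0,\dots)$, for which
\[
y(T;y_0,u)=e^{\Delta T}y_0+e^{\Delta\sigma}\chi_{\omega_1}u_1 .
\]
Writing $z:=e^{\Delta T}y_0\in L^2(\Omega)$, it then suffices to show that for every $r>0$ there is some $u_1\in L^2(\Omega)$ with $\|z+e^{\Delta\sigma}\chi_{\omega_1}u_1\|\le r$; equivalently, that the range
\[
R:=\big\{e^{\Delta\sigma}\chi_{\omega_1}v:\,v\in L^2(\Omega)\big\}
\]
is dense in $L^2(\Omega)$, since then $-z$ lies in its closure and any $u_1$ achieving the approximation yields an admissible control with $y(T;y_0,u)\in B_r(0)$.

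To establish density I would argue by duality and show $R^\perp=\{0\}$. Suppose $\eta\in L^2(\Omega)$ satisfies $\langle\eta,e^{\Delta\sigma}\chi_{\omega_1}v\rangle=0$ for all $v\in L^2(\Omega)$. Using the self-adjointness of $e^{\Delta\sigma}$ and of the multiplication operator $\chi_{\omega_1}$, this reads $\langle\chi_{\omega_1}e^{\Delta\sigma}\eta,v\rangle=0$ for all $v$, whence $e^{\Delta\sigma}\eta=0$ almost everywhere on $\omega_1$.

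The main point, and the step I expect to be the real obstacle, is a unique continuation argument. Because $\{e^{\Delta t}\}_{t\ge0}$ is analytic and $\sigma>0$, the function $e^{\Delta\sigma}\eta$ is real-analytic in the interior of the connected domain $\Omega$; vanishing on the nonempty open set $\omega_1$ therefore forces $e^{\Delta\sigma}\eta\equiv0$ throughout $\Omega$. Finally, $e^{\Delta\sigma}$ is injective, as is seen from the spectral expansion $e^{\Delta\sigma}\eta=\sum_n e^{-\lambda_n\sigma}\langle\eta,\phi_n\rangle\phi_n$ in an eigenbasis $\{\phi_n\}$ of $-\Delta$, so $\eta=0$. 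This proves $R^\perp=\{0\}$, hence $R$ is dense and the lemma follows. The only delicate ingredient is the interior analyticity/unique continuation for heat solutions, which is classical (it underlies the Fattorini--Russell approximate controllability theory); everything else is elementary Hilbert-space duality and the injectivity of the heat semigroup. I note that invoking just the first impulse is harmless here precisely because $T>\tau_1$ guarantees the positive time horizon $\sigma$ needed for the smoothing and the unique continuation.
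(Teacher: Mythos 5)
Your argument is correct, but there is nothing in the paper to compare it against: the paper does not prove Lemma \ref{lemma 3.1} at all, it simply quotes the statement from \cite{08} (Duan--Wang). Your proposal therefore supplies a self-contained proof of an imported result, and it is the standard one: discard all impulses after the first, reduce the claim to density of the range $R=\{e^{\Delta\sigma}\chi_{\omega_1}v\}$ with $\sigma=T-\tau_1>0$, and kill the orthogonal complement by the chain ``$\eta\perp R$ $\Rightarrow$ $e^{\Delta\sigma}\eta=0$ on $\omega_1$ $\Rightarrow$ $e^{\Delta\sigma}\eta\equiv 0$ by interior spatial analyticity of caloric functions on the connected domain $\Omega$ $\Rightarrow$ $\eta=0$ by injectivity of $e^{\Delta\sigma}$.'' Each step is sound: self-adjointness of $e^{\Delta\sigma}$ and of multiplication by $\chi_{\omega_1}$ gives the adjoint identity; real-analyticity of $x\mapsto (e^{\Delta\sigma}\eta)(x)$ in the interior for $\sigma>0$ is classical (and is exactly the unique-continuation ingredient you correctly single out as the only nontrivial point); and the eigenfunction expansion gives injectivity. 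Using only the first impulse is legitimate because the solution formula for $u=(u_1,0,0,\dots)$ is $y(T;y_0,u)=e^{\Delta T}y_0+e^{\Delta(T-\tau_1)}\chi_{\omega_1}u_1$ regardless of how many impulse instants lie in $(\tau_1,T)$. The only cosmetic remark is that you actually prove the stronger statement that the reachable set at time $T$ is dense in $L^2(\Omega)$, of which approximate null controllability is a special case; this costs nothing and is consistent with how the result is used in the paper.
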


\begin{lemma} \label{lemma 3.2}~Assume that (C1) stands.
For each $T\in(\tau_1,\gamma(y_0)]$, there exists at least one optimal control for problem $({\rm NP})_T$.
\end{lemma}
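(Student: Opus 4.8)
The plan is to run the standard direct-method (weak-compactness) argument for minimal-norm problems, exploiting the crucial structural feature that only the \emph{finitely many} impulses acting before time $T$ enter the problem. First I would check that the admissible set of $({\rm NP})_T$ is nonempty, so that $N^*(T)$ is a well-defined finite number. Since $T\in(\tau_1,\gamma(y_0)]$ we have $T>\tau_1$, and Lemma~\ref{lemma 3.1} supplies a control $u\in l^\infty(\mathbb N^+;L^2(\Omega))$ with $y(T;y_0,u)\in B_r(0)$. Because $y(T;y_0,\cdot)$ depends only on the impulses $u_1,\dots,u_k$ occurring at $\tau_1,\dots,\tau_k\le T$, truncating this control to $(u_1,\dots,u_k,0,0,\dots)$ leaves the state at time $T$ unchanged and produces an admissible control; hence $0\le N^*(T)<+\infty$.

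Next I would take a minimizing sequence $v^n=(v_1^n,\dots,v_k^n,0,0,\dots)$ with $y(T;y_0,v^n)\in B_r(0)$ and $\max_{1\le j\le k}\|v_j^n\|\to N^*(T)$. Each component sequence $(v_j^n)_n$ is then bounded in $L^2(\Omega)$, so---there being only the finitely many indices $j=1,\dots,k$---after passing to a subsequence I obtain $v_j^n\rightharpoonup v_j^*$ weakly in $L^2(\Omega)$ for every $j$, and I set $v^*=(v_1^*,\dots,v_k^*,0,0,\dots)$. It then remains to verify that $v^*$ is both admissible and optimal. For admissibility, recall $y(T;y_0,v)=e^{\Delta T}y_0+\sum_{j=1}^k e^{\Delta(T-\tau_j)}\chi_{\omega_j}v_j$; since each map $z\mapsto e^{\Delta(T-\tau_j)}\chi_{\omega_j}z$ is bounded and linear, hence weakly continuous, we get $y(T;y_0,v^n)\rightharpoonup y(T;y_0,v^*)$ in $L^2(\Omega)$, and as $B_r(0)$ is convex and closed it is weakly closed, so the weak limit $y(T;y_0,v^*)$ lies in $B_r(0)$. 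For optimality, weak lower semicontinuity of the $L^2$-norm gives $\|v_j^*\|\le\liminf_n\|v_j^n\|\le\liminf_n\max_{1\le i\le k}\|v_i^n\|=N^*(T)$ for each $j$, so $\max_{1\le j\le k}\|v_j^*\|\le N^*(T)$; the reverse inequality holds because $v^*$ is admissible and $N^*(T)$ is the infimum. Hence $\max_{1\le j\le k}\|v_j^*\|=N^*(T)$, and $v^*$ is an optimal control.

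The argument is largely routine, so the ``main obstacle'' is more of a point requiring care than a genuine difficulty: it is the passage to the weak limit in the target constraint. This is exactly where I invoke that $B_r(0)$ is weakly closed (from convexity and closedness) together with the weak continuity of the affine control-to-state operator evaluated at time $T$; without weak closedness one could not conclude admissibility of $v^*$. I would also emphasize that the finiteness of $k$ is what permits extracting the weak limit componentwise, avoiding any diagonal or $l^\infty$-compactness subtlety that an infinite number of active impulses would force. Condition (C1) is not actually needed for existence here---it is the standing hypothesis of this section and becomes essential only later, in the monotonicity/equivalence analysis of $N^*(\cdot)$ and $t^*(\cdot)$.
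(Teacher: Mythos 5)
Your proof is correct and follows essentially the same direct-method argument as the paper: take a minimizing sequence supported on the finitely many impulse instants $\tau_1,\dots,\tau_k\le T$, extract componentwise weak limits, use weak closedness of $B_r(0)$ for admissibility, and weak lower semicontinuity of the norm for optimality. Your added remarks --- the truncation argument for nonemptiness of the admissible set and the observation that (C1) is not actually needed for existence --- are both accurate.
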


\begin{proof}~We assume that $T\in[\tau_k,\tau_{k+1})$ for some $1\leq k\leq k_0$. By Lemma \ref{lemma 3.1}, there exists a sequence $\big\{u^\ell\big\}_{\ell\geq 1}\subset l^\infty\big(\mathbb{N}^+;L^2(\Omega)\big)$, where $u^\ell=\big(u^{\ell}_j\big)_{j\in\mathbb{N}^+}$ and $u_j^\ell=0$ for each $j>k$, so that
\begin{equation}\label{8.1}
\lim\limits_{\ell\rightarrow +\infty}\|u^{\ell}\|_\infty=N^*(T)\,\,\,\text{ and }\,\,\,y(T;y_0,u^\ell)\in B_r(0).
\end{equation}
Moreover, there exists a control $\widetilde{u}=\big(\widetilde{u}_j\big)_{j\in\mathbb{N}^+}$ with $\widetilde{u}_j=0$ for each $j>k$, and a subsequence of $\big\{u^\ell\big\}_{\ell\geq 1}$, still denoted by itself, so that
\begin{equation}\label{8.2}
u^{\ell}_j
\,\xrightarrow{w}\,\widetilde{u}_j~~\text{in}~L^2(\Omega) \,\,\text{ for each }\,\,j=1,2,...,k.
\end{equation}
It follows from \eqref{8.2} and the second relation in \eqref{8.1} that
\begin{equation*}
y(T;y_0,{u}^\ell)\,\xrightarrow{w}\, y(T;y_0,\widetilde{u})\in B_r(0)\,\,\,\text{in}\,\,L^2(\Omega).
\end{equation*}
This yields that $\widetilde{u}$ is an admissible control for problem $({\rm NP})_T$. Furthermore, we have that
\begin{equation*}\label{8.3}
\|\widetilde{u}\|_\infty\leq\lim\limits_{\ell\rightarrow\infty}\|u^{\ell}\|_\infty=N^*(T).
\end{equation*}
According to the definition of $N^*(T)$, we obtain that $\|\widetilde{u}\|_\infty=N^*(T)$ ,
which yields that $\widetilde{u}$ is an optimal control for problem $({\rm NP})_T$.
Hence, there exists an optimal control for problem $({\rm NP})_T$.
\end{proof}
\vspace{0.2cm}
\par Since $\gamma(y_0)=\min\big\{t>0: \,y(t;y_0,0)\in B_r(0)\big\}$, we have
\begin{equation*}
N^*\big(\gamma(y_0)\big)=0.
\end{equation*}
Moreover, the minimal norm function satisfies the following propositions.

\begin{proposition}\label{Proposition 3.1}~Assume that (C1) stands.  If $\tau_1<T_1<T_2<\gamma(y_0)$,  then
\begin{equation*}
0< N^*(T_2)<N^*(T_1)<+\infty.
\end{equation*}
\end{proposition}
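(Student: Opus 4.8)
The plan is to dispatch the three assertions $N^*(T_1)<+\infty$, $0<N^*(T_2)$ and $N^*(T_2)<N^*(T_1)$ in turn, the first two being short. Finiteness is immediate: since $T_1,T_2>\tau_1$, Lemma~\ref{lemma 3.1} provides, for each of $T_1,T_2$, an admissible control supported on finitely many impulses, so each infimum defining $N^*(\cdot)$ is over a nonempty set and is bounded by the norm of such a control; equivalently, Lemma~\ref{lemma 3.2} already yields an optimal control, whence $N^*(T_i)<+\infty$. For positivity I would argue by contradiction: if $N^*(T_i)=0$, then the optimal control $\widetilde u$ of $({\rm NP})_{T_i}$ satisfies $\|\widetilde u_j\|\le N^*(T_i)=0$ for all $j$, so $\widetilde u=0$ and admissibility reads $e^{\Delta T_i}y_0=y(T_i;y_0,0)\in B_r(0)$; by the very definition of $\gamma(y_0)$ as the first time the free solution enters $B_r(0)$, this forces $\gamma(y_0)\le T_i$, contradicting $T_i<\gamma(y_0)$. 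Hence $N^*(T_1),N^*(T_2)>0$.

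For the strict monotonicity, let $\widetilde u=(\widetilde u_j)_{j}$ be an optimal control for $({\rm NP})_{T_1}$ from Lemma~\ref{lemma 3.2}, so $\|\widetilde u\|_\infty=N^*(T_1)$, $y(T_1;y_0,\widetilde u)\in B_r(0)$, and $\widetilde u_j=0$ for $j>k_1$, where $k_1$ is the largest index with $\tau_{k_1}\le T_1$. I would then use $\widetilde u$ itself as a candidate control for $({\rm NP})_{T_2}$: it is already zero on every impulse instant lying in $(T_1,T_2]$, so the solution formula gives
\begin{equation*}
y(T_2;y_0,\widetilde u)=e^{\Delta(T_2-T_1)}\,y(T_1;y_0,\widetilde u).
\end{equation*}
Since the Dirichlet heat semigroup is strictly contractive --- $\|e^{\Delta t}f\|<\|f\|$ for every $t>0$ and $f\neq 0$, with $e^{\Delta t}0=0$ --- and $\|y(T_1;y_0,\widetilde u)\|\le r$, over the positive gap $T_2-T_1$ this produces the strict slack
\begin{equation*}
\|y(T_2;y_0,\widetilde u)\|<r.
\end{equation*}

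I would then convert this strict interiority into a genuine reduction of the control cost by scaling. Setting $v^{s}:=s\,\widetilde u$ and using the affine dependence of the state on the control,
\begin{equation*}
y(T_2;y_0,v^{s})=(1-s)\,e^{\Delta T_2}y_0+s\,y(T_2;y_0,\widetilde u),
\end{equation*}
the map $s\mapsto\|y(T_2;y_0,v^{s})\|$ is continuous and attains a value strictly below $r$ at $s=1$; hence some $s_0<1$ still gives $\|y(T_2;y_0,v^{s_0})\|\le r$. Thus $v^{s_0}$ is admissible for $({\rm NP})_{T_2}$ with $\|v^{s_0}\|_\infty=s_0\,N^*(T_1)$, and since $N^*(T_1)>0$ we conclude $N^*(T_2)\le s_0\,N^*(T_1)<N^*(T_1)$.

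The only genuinely delicate point, and the one on which everything downstream rests, is securing the strict inequality $\|y(T_2;y_0,\widetilde u)\|<r$ rather than merely $\le r$; without it the strict decrease of $N^*$ collapses to the trivial bound $N^*(T_2)\le N^*(T_1)$. This strictness is exactly what the strict contractivity of $\{e^{\Delta t}\}$ over the positive time gap $T_2-T_1$ supplies, and I would double-check it in the two boundary cases --- $y(T_1;y_0,\widetilde u)=0$ (trivially strict because $r>0$) and $T_2$ coinciding with an impulse instant $\tau_{k_2}$ (harmless, since the impulse placed there is zero).
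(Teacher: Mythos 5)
Your proposal is correct and follows essentially the same route as the paper: positivity and finiteness come from the definition of $\gamma(y_0)$ together with the existence of optimal controls, and strict monotonicity comes from taking the optimal control for $({\rm NP})_{T_1}$, exploiting the decay of the semigroup over the gap $(T_1,T_2)$ to create slack in the target constraint, and then scaling the control by a factor strictly less than one. The only cosmetic difference is the order of operations --- the paper scales by $(1-\varepsilon)$ first and chooses $\varepsilon$ via the quantitative bound $\|e^{\Delta t}y\|\le e^{-\lambda_1 t}\|y\|$, while you first establish $\|y(T_2;y_0,\widetilde u)\|<r$ and then pick the scaling factor by continuity --- which changes nothing of substance.
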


\begin{proof}
For any $0<T<\gamma(y_0)$, we have
\begin{equation*}
\|y(T;y_0,0)\|>r.
\end{equation*}
Therefore, $0$ is not an optimal control for $({\rm NP})_T$, which yields that $N^*(T)>0$.
This, together with  Lemma \ref{lemma 3.2}, shows  that
$$0<N^*(T)<+\infty \textrm{ for any } 0<T<\gamma(y_0).$$
\par Let $u^1=(u^1_1,u^1_2,...,u^1_{k_1},\,0,\,0,...)$ be an optimal control for $({\rm NP})_{T_1}$, where $k_1$ is the maximum integer so that $\tau_{k_1}\leq T_1$.
Then
\begin{equation}\label{5.15}
\|y(T_1;y_0,u^1)\|\leq r \,\,\text{ and }\,\,\|u^1\|_\infty=N^*(T_1).
\end{equation}
Denote by $k_2$ the maximum integer so that $\tau_{k_2}\leq T_2$. Since $T_1<T_2$, we have $k_1\leq k_2$.
Set $u^2=(u^2_j)_{j\in\mathbb N^+}$ with
\begin{equation}\label{5.16}
u^2_j=\left\{\begin{array}{ll}
	(1-\varepsilon) u_j^1 & 1\leq j\leq k_1, \\
	0 &j> k_1,
\end{array}\right.
\end{equation}
where  $0<\varepsilon <1$ will be determined later. Then we have
$
u^2_j=0 \textrm{ for each }j>k_2
$
and
\begin{equation}\label{6.12}
\begin{aligned}
y\big(T_2;y_0,u^2\big)&=e^{\Delta(T_2-T_1)}y\big(T_1;y_0,(1-\varepsilon)u^1\big)\\
&=e^{\Delta(T_2-T_1)}y\big(T_1;y_0,u^1\big)-\varepsilon e^{\Delta(T_2-T_1)}y\big(T_1;0,u^1\big).
\end{aligned}
\end{equation}
Since $\|e^{\Delta t}y\|\leq e^{-\lambda _1 t}\|y\|$ for all $t>0$ and $y\in L^2(\Omega)$, \eqref{6.12} shows that
\begin{equation*}
\|y(T_2;y_0,u^2)\|\leq e^{-\lambda_1(T_2-T_1)}\|y(T_1;y_0,u^1)\|+\varepsilon e^{-\lambda_1(T_2-T_1)}k_1\|u^1\|_\infty.
\end{equation*}
This, combined with \eqref{5.15}, indicates that
\begin{equation*}
\|y(T_2;y_0,u^2)\|\leq r-(1- e^{-\lambda_1(T_2-T_1)})r+\varepsilon e^{-\lambda_1(T_2-T_1)}k_1N^*(T_1).
\end{equation*}
Taking $\varepsilon$ small enough so that
\begin{equation*}
\varepsilon e^{-\lambda_1(T_2-T_1)}k_1N^*(T_1)\leq(1- e^{-\lambda_1(T_2-T_1)})r,
\end{equation*}
we get that
\begin{equation*}
\|y(T_2;y_0,u^2)\|\leq r.
\end{equation*}
This shows that $u^2$ is an admissible control for $({\rm NP})_{T_2}$. Then by \eqref{5.15} and  \eqref{5.16}, we obtain that
\begin{equation*}
N^*(T_2)\leq \|u^2\|_\infty<\|u^1\|_\infty=N^*(T_1).
\end{equation*}
This completes the proof.
\end{proof}

\vspace{0.2cm}
\begin{proposition}\label{Proposition 3.2}~Assume that (C1) stands. The function $N^*(\cdot)$ is right continuous over $\big(\tau_1,\gamma(y_0)\big)$ and left continuous over $\big(\tau_1,\gamma(y_0)\big]\backslash\{\tau_k\}_{k=1}^{k_0}$.
\end{proposition}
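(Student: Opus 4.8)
The plan is to exploit the monotonicity of $N^*(\cdot)$ established in Proposition \ref{Proposition 3.1}. Since $N^*(\cdot)$ is strictly decreasing on $(\tau_1,\gamma(y_0))$, its one-sided limits exist at every interior point and satisfy $N^*(T_0^+)\le N^*(T_0)\le N^*(T_0^-)$. Hence right continuity at $T_0$ amounts to the lower bound $N^*(T_0^+)\ge N^*(T_0)$, while left continuity amounts to the upper bound $N^*(T_0^-)\le N^*(T_0)$. A preliminary observation is that the number $k$ of admissible impulses is locally constant on the relevant side: to the right of any $T_0\in(\tau_1,\gamma(y_0))$, and at $T_0$ itself, one has the same $k$, whereas to the left of a non-impulse $T_0$ one again has the same $k$. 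This is precisely why left continuity can only be expected away from the instants $\tau_k$: there a newly available impulse enlarges the feasible set and may cause a downward jump. I would treat the two one-sided statements by different tools.

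For right continuity I would argue by weak compactness (lower semicontinuity). Fix $T_0\in(\tau_1,\gamma(y_0))$ and a sequence $T_n\downarrow T_0$ with $T_n<\gamma(y_0)$, and let $u^{T_n}$ be optimal controls for $(\mathrm{NP})_{T_n}$, which exist by Lemma \ref{lemma 3.2}. By monotonicity $\|u^{T_n}\|_\infty=N^*(T_n)\le N^*(T_0)$, so each component $u^{T_n}_j$ ($1\le j\le k$) is bounded in $L^2(\Omega)$; after passing to a subsequence, $u^{T_n}_j\rightharpoonup\bar u_j$ weakly. Combining the strong continuity of $\{e^{\Delta t}\}$ in $t$ with the weak convergence of the controls, I would pass to the limit to get $y(T_n;y_0,u^{T_n})\rightharpoonup y(T_0;y_0,\bar u)$; since $B_r(0)$ is convex and closed, hence weakly closed, the limit lies in $B_r(0)$, so $\bar u$ is admissible for $(\mathrm{NP})_{T_0}$. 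Weak lower semicontinuity of the norm then yields $N^*(T_0)\le\|\bar u\|_\infty\le\liminf_n N^*(T_n)$, and together with $N^*(T_n)\le N^*(T_0)$ this forces $N^*(T_n)\to N^*(T_0)$. As the monotone right limit exists, right continuity follows.

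For left continuity at a non-impulse $T_0\in(\tau_1,\gamma(y_0)]\setminus\{\tau_k\}_{k=1}^{k_0}$, a weak-limit argument only reproduces the trivial inequality $N^*(T_0)\le N^*(T_0^-)$; the substantive point is the upper bound, which I would obtain by an explicit construction. The difficulty is that an optimal control $u^{T_0}$ for $(\mathrm{NP})_{T_0}$ may steer the state exactly onto $\partial B_r(0)$, and since $\|e^{\Delta(T_0-T)}\cdot\|\le\|\cdot\|$ the same control generally leaves $B_r(0)$ at earlier times $T<T_0$, so it need not be admissible for $(\mathrm{NP})_T$. To circumvent this I would first produce, for any $\epsilon>0$, a near-optimal control reaching the strict interior: using the approximate null controllability of Lemma \ref{lemma 3.1} I pick $u^0$ (supported on the same $k$ impulses) with $\|y(T_0;y_0,u^0)\|<r/2$, and set $\hat u:=(1-\theta)u^{T_0}+\theta u^0$. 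By the affine dependence of $y(T_0;y_0,\cdot)$ on the control, $\|y(T_0;y_0,\hat u)\|\le(1-\theta)r+\theta(r/2)<r$, while $\|\hat u\|_\infty\le(1-\theta)N^*(T_0)+\theta\|u^0\|_\infty\le N^*(T_0)+\epsilon$ for $\theta$ small. Because $y(T;y_0,\hat u)\to y(T_0;y_0,\hat u)$ strongly as $T\uparrow T_0$ (strong continuity of the semigroup, same $k$ impulses), $\hat u$ remains admissible for $(\mathrm{NP})_T$ once $T$ is close enough to $T_0$, whence $N^*(T)\le N^*(T_0)+\epsilon$. Letting $T\uparrow T_0$ and then $\epsilon\downarrow0$ yields $N^*(T_0^-)\le N^*(T_0)$, and with monotonicity this gives left continuity; the same construction, with $u^{T_0}=0$, also settles the endpoint $T_0=\gamma(y_0)$ when it is not an impulse instant.

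I expect the main obstacle to be exactly the boundary case in the last paragraph: handling an optimal control that lands on $\partial B_r(0)$, where naive reuse of the control fails and a quantitative interior-approximation (a convex combination with a near-null control, exploiting affineness to keep the norm within $\epsilon$ of $N^*(T_0)$) is needed. A secondary point requiring care is the justification that the controlled terms $e^{\Delta(T_n-\tau_j)}\chi_{\omega_j}u^{T_n}_j$ converge weakly to $e^{\Delta(T_0-\tau_j)}\chi_{\omega_j}\bar u_j$, which combines the uniform boundedness of the controls with the strong continuity and self-adjointness of the semigroup.
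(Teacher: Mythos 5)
Your proposal is correct and follows essentially the same strategy as the paper: weak compactness plus weak lower semicontinuity of the norm for right continuity, and, for left continuity, a convex-combination construction producing a control of norm at most $N^*(T_0)+\epsilon$ that steers the state strictly inside $B_r(0)$ at $T_0$, followed by continuity in time on the impulse-free interval. The only difference is cosmetic: in the left-continuity step the paper mixes the optimal control at $T_0$ with the optimal control at an earlier time $T_K$ (weights $\tfrac23$ and $\tfrac13$, using the decay factor $e^{-\lambda_1(T_0-T_K)}$ to gain strict interiority, phrased as a contradiction), whereas you mix with a near-null control supplied by Lemma \ref{lemma 3.1} and let the mixing parameter $\theta$ tend to $0$; both yield the same bound $N^*(T)\le N^*(T_0)+\epsilon$ for $T$ slightly below $T_0$.
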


\begin{proof}The proof is divided into the following two steps.
\par {\itshape Step 1.}~We prove that $N^*(\cdot)$ is right continuous over $\left(\tau_1,\gamma(y_0)\right)$.
\par Let  $T\in (\tau_1,\gamma(y_0))$  be arbitrary but fixed. Then   $T\in[\tau_k,\tau_{k+1})$ for some   $1\leq k\leq k_0$.
Let $\{T_n\}_{n\geq 1}\subset(\tau_k,\tau_{k+1})\cap\left(\tau_1,\gamma(y_0)\right)$ be a sequence strictly  decreasing to $T$.
It suffices to prove that
\begin{equation}\label{6.16}
\lim\limits_{n\rightarrow \infty}N^*(T_n)= N^*(T).
\end{equation}

By the strict monotonicity of $N^*(\cdot)$, we have that
\begin{equation*}
N^*(T_1)<N^*(T_2)<\cdot\cdot\cdot<N^*(T_n)<\cdot\cdot\cdot<N^*(T).
\end{equation*}
This yields that
\begin{equation}\label{6.1}
\lim\limits_{n\rightarrow \infty}N^*(T_n)\leq N^*(T).
\end{equation}
Denote by $u^n=\big(u^n_j\big)_{j\in\mathbb{N}^+}$ an optimal control for $({\rm NP})_{T_n}$. Then it stands that
\begin{equation}\label{6.2}
\|u^n\|_\infty=N^*(T_n)\,\,\text{ and }\,\,\|y(T_n;y_0,u^n)\|\leq r.
\end{equation}
Therefore, there exists a subsequence of $\{u^n\}_{n\in\mathbb{N}^+}$, still denoted by itself, and a control $u^*=\big(u_j^*\big)_{j\in\mathbb{N}^+}$ such that
\begin{equation}\label{51}
u_j^n{\,\xrightarrow{w}\,}\,u_j^*~\text{ in }~L^2(\Omega)~~~\text{ for each }~1\leq j\leq k.
\end{equation}
It follows from \eqref{51} that
\begin{equation}\label{6.4}
\|u^*\|_\infty\leq\lim\limits_{n\rightarrow \infty}\|u^n\|_\infty\leq \lim\limits_{n\rightarrow \infty} N^*(T_n)
\end{equation}
and
\begin{equation}\label{0717-1}
y(T;y_0,u^n){\,\xrightarrow{w}\,} y(T;y_0,u^*) \textrm{ as } n\to\infty.
\end{equation}
Moreover, since $\displaystyle\lim_{n\to \infty} T_n=T$ and $\{T_n\}_{n\in\mathbb N^+}\subset (\tau_k, \tau_{k+1})$, it is easy to show that
\begin{equation}\label{0717-2}
\lim_{n\to \infty}\left[ y(T;y_0, u^n)-y(T_n;y_0,u^n)\right]=0.
\end{equation}

By writing
\begin{align*}\label{50}
\big\|y(T;y_0,u^*)\big\|^2=&\;\big\langle y(T;y_0,u^*)-y(T;y_0,u^n),y(T;y_0,u^*)\big\rangle\\
&+\big\langle y(T;y_0,u^n)-y(T_n;y_0,u^n),y(T;y_0,u^*)\big\rangle\\
&+\big\langle y(T_n;y_0,u^n),y(T;y_0,u^*)\big\rangle
\end{align*}
and using   \eqref{6.2},  we get that
\begin{align*}
\big\|y(T;y_0,u^*)\big\|^2\leq &\;\big\langle y(T;y_0,u^*)-y(T;y_0,u^n),y(T;y_0,u^*)\big\rangle\\
&+\big\langle y(T;y_0,u^n)-y(T_n;y_0,u^n),y(T;y_0,u^*)\big\rangle\\
&+r\|y(T;y_0,u^*)\|.
\end{align*}
Passing to the limit for $n\rightarrow \infty$ in the above inequality and  using  \eqref{0717-1}-\eqref{0717-2}, we obtain that
\begin{equation*}
\|y(T;y_0,u^*)\|\leq r.
\end{equation*}
This shows that $u^*$ is an admissible control for $({\rm NP})_T$ and then
$$
N^*(T)\leq\|u^*\|_\infty,
$$
which, together with \eqref{6.1} and \eqref{6.4}, shows that \eqref{6.16} stands.
\vspace{0.2em}

{\itshape Step 2.}~We prove that $N^*(\cdot)$ is left continuous over $(\tau_1,\gamma(y_0)]\backslash\{\tau_k\}_{k=1}^{k_0}$.
\par Let $T\in(\tau_1,\gamma(y_0)]\backslash\{\tau_k\}_{k=1}^{k_0}$ be arbitrary but fixed.
Without loss of generality, assume that $T\in(\tau_k,\tau_{k+1})$ for some $1\leq k\leq k_0-1$. Let $\{T_n\}_{n\geq 1}\subset(\tau_k,\tau_{k+1})$
be a sequence strictly  increasing to $T$.
It suffices to prove that
\begin{equation}\label{6.17}
\lim\limits_{n\rightarrow \infty}N^*(T_n)= N^*(T).
\end{equation}

By the strict monotonicity of $N^*(\cdot)$, we have that
\begin{equation*}\label{6.6}
N^*(T_1)>N^*(T_2)>\cdot\cdot\cdot>N^*(T_n)>\cdot\cdot\cdot>N^*(T).
\end{equation*}
Therefore, we have
\begin{equation*}
\lim\limits_{n\rightarrow\infty}N^*(T_n)\geq N^*(T).
\end{equation*}
By contradiction, suppose that
\begin{equation*}\label{6.7}
\lim\limits_{n\rightarrow\infty}N^*(T_n)=N^*(T)+2\delta~~\text{ for some }~~\delta>0.
\end{equation*}
Then we have
\begin{equation}\label{6.8}
N^*(T_n)>N^*(T)+2\delta~~\text{ for each }~~n\in\mathbb{N}^+
\end{equation}
and there exists $K\in\mathbb{N}^+$ so that
\begin{equation}\label{6.9}
N^*(T_n)<N^*(T)+3\delta~~\text{ for each }~~n\geq K.
\end{equation}
Denote by $u^n$ and $u^*$ an optimal control for $({\rm NP})_{T_n}$ and $({\rm NP})_T$, respectively. Then we have
\begin{equation}\label{6.13}
\|y(T_n;y_0,u^n)\|\leq r~~\text{ and }~~\|y(T;y_0,u^*)\|\leq r.
\end{equation}
Moreover, we have that
\begin{equation}\label{6.15}
\|{u^n}\|_\infty= N^*(T_n)~~\text{ and }~~\|{u^*}\|_\infty= N^*(T).
\end{equation}

Set
\begin{equation*}
\widehat{u}=\frac{2}{3}u^*+\frac{1}{3}u^K.
\end{equation*}
Then it follows from \eqref{6.9} and \eqref{6.15}  that
\begin{equation}\label{6.10}
\big\|\widehat{u}\big\|_\infty\leq\frac{2}{3}\big\|u^*\big\|_\infty+\frac{1}{3}\big\|u^K\big\|_\infty<\frac{2}{3} N^*\big(T\big)+\frac{1}{3}\big(N^*(T)+3\delta\big)<N^*\big(T\big)+\delta
\end{equation}
and
\begin{equation*}\label{6.11}
y\left(T;y_0,\widehat{u}\right)=\frac{2}{3}y(T;y_0,u^*)+\frac{1}{3}y\left(T;y_0,u^K\right).
\end{equation*}
This, together with \eqref{6.13}, shows that
\begin{equation*}
\begin{aligned}
\|y(T;y_0,\widehat{u})\|&\leq \frac{2}{3}\|y(T;y_0,u^*)\|+\frac{1}{3}\big\|e^{\Delta(T-T_K)}y\left(T_K;y_0,u^K\right)\big\|\\
&\leq \frac{2}{3} r+\frac{1}{3}e^{-\lambda_1(T-T_K)} r\\
&=r-\frac{r}{3}\left(1-e^{-\lambda_1(T-T_K)}\right)\\
&<r.
\end{aligned}
\end{equation*}
Since $y(\cdot;y_0,\widehat{u})$ is continuous over $(\tau_k,\tau_{k+1})$, there exists $n\in\mathbb N^+$ large enough  so that
\begin{equation*}
\|y(T_n;y_0,\widehat{u})\|\leq r.
\end{equation*}
This shows that $\widehat{u}$ is an admissible control for $({\rm NP})_{T_n}$. Then, by \eqref{6.10}, we get that
\begin{equation*}
N^*(T_n)\leq \|\widehat{u}\|_\infty<N^*(T)+\delta,
\end{equation*}
which leads to a contradiction with \eqref{6.8}. Therefore \eqref{6.17} is true.
\end{proof}

\vspace{0.2cm}
\subsection{Proof of Theorem \ref{Theorem 1.2}}
\par  We are ready to prove Theorem \ref{Theorem 1.2}. The proof is divided into 3 steps.
\par {\itshape Step 1.}~We prove \eqref{2.6}.
\par When $T=\gamma(y_0)$,  \eqref{2.6} holds trivially,
and the null control is   uniquely  optimal for both  $({\rm NP})_T$ and $({\rm TP})_{N^*(T)}$.

 Let $u^T$ be an optimal control for $({\rm NP})_T$ for a given $T\in\big(\tau_1,\gamma(y_0)\big)$. It follows that
\begin{equation}\label{4.1}
\|y(T;y_0,u^T)\|\leq r~~\text{ and }~~\|u^T\|_\infty=N^*(T)\geq 0.
\end{equation}
By \eqref{4.1} and the definition of problem $({\rm TP})_{N^*(T)}$, $u^T$ is an admissible control for $({\rm TP})_{N^*(T)}$ and then
\begin{equation}\label{11.1}
\tau_1<t^*\big(N^*(T)\big)\leq T.
\end{equation}
According to Lemma \ref{Theorem 2.1}, there  exists a control $\widehat{u}$ so that
\begin{equation*}
\|\widehat{u}\|_\infty\leq N^*(T)\,\,\text{ and }\,\,\big\|y\big(t^*(N^*(T));y_0,\widehat{u}\big)\big\|\leq r.
\end{equation*}
This shows that $\widehat{u}$ is an admissible control for problem $({\rm NP})_{t^*(N^*(T))}$, and then
\begin{equation}\label{4.2}
N^*\big(t^*\big(N^*(T)\big)\big)\leq \|\widehat{u}\|_\infty \leq  N^*\big(T\big).
\end{equation}
 By \eqref{4.2}  and the monotonicity of $N^*(\cdot)$ proved in Proposition  \ref{Proposition 3.1},
 we obtain  that $t^*\big(N^*(T)\big)\geq T$. This, combined with \eqref{11.1}, leads to \eqref{2.6}.
 At last, by \eqref{2.6} and \eqref{4.1}, $u^T$ is  the optimal control for $({\rm TP})_{N^*(T)}$.
\par {\itshape Step 2.}~We prove \eqref{2.7}.
\par Let $M\in[0,+\infty)$ be such that $t^*(M)\in(\tau_k,\tau_{k+1})$ for some $1\leq k\leq k_0$. Denote by $u^M$ the optimal control for $({\rm TP})_M$. Combining this with the bang-bang property of $u^M$, we get that
\begin{equation}\label{4.4}
\big\|y\big(t^*(M);y_0,u^M\big)\big\|\leq r~~\text{ and }~~\big\|u^M\big\|_\infty =M.
\end{equation}
This shows that $u^M$ is an admissible control for $({\rm NP})_{t^*(M)}$ and then
\begin{equation*}
N^*\big(t^*(M)\big)\leq \big\|u^M\big\|_\infty=M.
\end{equation*}
If $N^*\big(t^*(M)\big)<M$, then there would exist a control $\widehat{u}$ so that
\begin{equation*}
\big\|y\big(t^*(M);y_0,\widehat{u}\big)\big\|\leq r~~\text{ and }~~\|\widehat{u}\|_\infty=N^*\big(t^*(M)\big)<M.
\end{equation*}
This shows that $\widehat{u}$ is also an optimal control for $({\rm TP})_M$, which contradicts  the bang-bang property of $({\rm TP})_M$. Therefore, \eqref{2.7} stands. This, together with \eqref{4.4}, shows that $u^M$ is also an optimal control for $({\rm NP})_{t^*(M)}$.
\par {\itshape Step 3.}~We prove \eqref{2.8}.
\par We suppose that $t^*(M)=\tau_k$ for some $2\leq k\leq k_0$. Denote by $\widetilde u$ the optimal control with minimal norm for $({\rm TP})_M$. By the similar proof in step 2, we have
\begin{equation*}
N^*\big(t^*(M)\big)\leq M_1.
\end{equation*}
If $N^*\big(t^*(M)\big)< M_1$, there would exist a control $\widehat{u}$ so that
\begin{equation*}\label{0717-3}
\big\|y(t^*(M);y_0,\widehat{u})\big\|\leq r~~\text{ and }~~\|\widehat{u}\|_\infty=N^*\big(t^*(M)\big)<M_1\leq M.
\end{equation*}
This shows that $\widehat{u}$ is also an optimal control for $({\rm TP})_{M}$,
and contradicts the fact that $\widetilde u$ is the optimal control with minimal norm for $({\rm TP})_M$.
Thus \eqref{2.8} stands. This shows that  $\widetilde u$ is an optimal control for $({\rm NP})_{t^*(M)}$.
\par In summary, we finish the proof of Theorem \ref{Theorem 1.2}.

\section{Analytical properties of the minimal time function \texorpdfstring{$t^*(\cdot)$}{t*(·)}}
\par Asssume that (C1) stands.  It is easy to verify that
\begin{equation}\label{0717-4}
t^*(0)=\gamma(y_0) \textrm{ and }\lim_{M\to\infty} t^*(M)=\tau_1.
\end{equation}
By  Lemma \ref{lemma 3.2} and \eqref{2.6},  we have
\begin{equation*}
\big\{M\geq 0:\, t^*(M)= \tau_k\big\} \neq \varnothing  \,\,\textrm{for each}\,\,2\leq k\leq k_0.
\end{equation*}
For each $2\leq k\leq k_0$, we  recall  the definitions
\begin{equation}\label{4.5}
M_k^{\inf}:=\inf\big\{M\geq 0:\, t^*(M)= \tau_k\big\}
\end{equation}
and
\begin{equation}\label{4.6}
M_k^{\sup}:=\sup\big\{M\geq 0:\, t^*(M)= \tau_k\big\}.
\end{equation}
Then by Proposition  \ref{Proposition 2.6} and \eqref{0717-4}, we obtain that
\begin{equation}\label{4.7}
0\leq M_{k_0}^{\inf}\leq M_{k_0}^{\sup}\leq M_{k_0-1}^{\inf}\leq M_{k_0-1}^{\sup} \leq\cdot\cdot\cdot\leq M_{2}^{\inf}\leq M_{2}^{\sup}<+\infty.
\end{equation}
Moreover, for each $2\leq k\leq k_0$, it stands that
\begin{equation}\label{4.8}
t^*(M)=\begin{cases}
\tau_k,~~\text{ if  }M_{k}^{\inf}<M_{k}^{\sup}\textrm{ and }M\in(M_{k}^{\inf},M_{k}^{\sup});\\
\tau_k,~~\text{ if  }M=M_{k}^{\inf}=M_{k}^{\sup}.
\end{cases}
\end{equation}
Furthermore, $M_{k}^{\inf}$ and $M_{k}^{\sup}$ have the following relationship with $N^*(\cdot)$.
\vspace{0.2cm}

\begin{lemma}\label{lemma 4.1}~Assume that (C1) stands. For each $2\leq k\leq k_0$, it stands that
\begin{equation}\label{300}
M_{k}^{\inf}=N^*(\tau_k)~~\text{ and }~~M_{k}^{\sup}=\lim\limits_{t\rightarrow \tau_k^-}N^*(t).
\end{equation}
\end{lemma}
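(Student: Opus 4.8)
The plan is to prove the two identities in \eqref{300} separately, in both cases exploiting the equivalence between $({\rm TP})$ and $({\rm NP})$ from Theorem \ref{Theorem 1.2} together with the strict monotonicity of $N^*(\cdot)$ (Proposition \ref{Proposition 3.1}) and the monotonicity of $t^*(\cdot)$ (Proposition \ref{Proposition 2.6}). Throughout I would fix $2\le k\le k_0$ and write $L:=\lim_{t\to\tau_k^-}N^*(t)$. This left limit exists and is finite since $N^*(\cdot)$ is decreasing and nonnegative on $(\tau_1,\gamma(y_0))$; moreover $N^*(\tau_k)\le L$, because $N^*(\tau_k)\le N^*(t)$ for every $t<\tau_k$ by monotonicity.

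For the identity $M_k^{\inf}=N^*(\tau_k)$, I would prove two inequalities. Applying Theorem \ref{Theorem 1.2}(1) with $T=\tau_k\in(\tau_1,\gamma(y_0)]$ gives $t^*\big(N^*(\tau_k)\big)=\tau_k$, so $N^*(\tau_k)$ lies in $\{M:\,t^*(M)=\tau_k\}$ and hence $M_k^{\inf}\le N^*(\tau_k)$. Conversely, for any $M$ with $t^*(M)=\tau_k$, Theorem \ref{Theorem 1.2}(2)(ii) yields $N^*(\tau_k)=N^*\big(t^*(M)\big)=M_1\le M$, where $M_1$ is the minimal norm among the optimal controls for $({\rm TP})_M$; taking the infimum over such $M$ gives $N^*(\tau_k)\le M_k^{\inf}$. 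Combining the two yields the first identity.

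For the identity $M_k^{\sup}=L$, I would show that $L$ is an upper bound for $\{M:\,t^*(M)=\tau_k\}$ and that it is attained. For the upper bound, take any $M>L$; since $N^*(t)\downarrow L$ as $t\uparrow\tau_k$, there is $T\in(\tau_{k-1},\tau_k)\subseteq(\tau_1,\gamma(y_0))$ with $N^*(T)<M$, and Theorem \ref{Theorem 1.2}(1) gives $t^*\big(N^*(T)\big)=T$. Monotonicity of $t^*(\cdot)$ then forces $t^*(M)\le t^*\big(N^*(T)\big)=T<\tau_k$, so $M\notin\{M:\,t^*(M)=\tau_k\}$ and therefore $M_k^{\sup}\le L$. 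For attainment I would prove $t^*(L)=\tau_k$. The bound $t^*(L)\le\tau_k$ is immediate: since $N^*(\tau_k)\le L$, an optimal control for $({\rm NP})_{\tau_k}$ (Lemma \ref{lemma 3.2}) lies in $\mathcal{U}_L$ and steers $y_0$ into $B_r(0)$ at time $\tau_k$.

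The main obstacle is the reverse bound $t^*(L)\ge\tau_k$, which I would establish by contradiction. Under (C1) one has $t^*(L)>\tau_1$, so if $t^*(L)<\tau_k$ then $t^*(L)\in(\tau_1,\tau_k)$ and one of two cases occurs. If $t^*(L)\in(\tau_j,\tau_{j+1})$ for some $1\le j\le k-1$, then Theorem \ref{Theorem 1.2}(2)(i) gives $N^*\big(t^*(L)\big)=L$; choosing $T$ with $t^*(L)<T<\tau_k$ and invoking the strict monotonicity of $N^*(\cdot)$ yields $N^*\big(t^*(L)\big)>N^*(T)\ge L$, a contradiction. If instead $t^*(L)=\tau_j$ for some $2\le j\le k-1$, then Theorem \ref{Theorem 1.2}(2)(ii) gives $N^*(\tau_j)=M_1\le L$, while strict monotonicity again forces $N^*(\tau_j)>L$, a contradiction. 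Hence $t^*(L)=\tau_k$, so $L\in\{M:\,t^*(M)=\tau_k\}$ and $M_k^{\sup}\ge L$; together with the upper bound this gives $M_k^{\sup}=L$. The delicate points to get right are the exclusion of $t^*(L)=\tau_1$ using (C1) and the consistent use of the bounds $N^*(t)\ge L$ (and $>L$) for $t<\tau_k$ coming from $N^*(\cdot)$ being strictly decreasing with left limit $L$ at $\tau_k$.
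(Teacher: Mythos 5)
Your proof is correct, and for the harder half of the lemma it is organized quite differently from the paper's. For $M_k^{\inf}=N^*(\tau_k)$ both arguments are essentially the same (the $\le$ direction via $t^*(N^*(\tau_k))=\tau_k$ from Theorem \ref{Theorem 1.2}(1); for the $\ge$ direction the paper extracts an admissible control for $({\rm NP})_{\tau_k}$ directly from any $M$ with $t^*(M)=\tau_k$, whereas you route through Theorem \ref{Theorem 1.2}(2)(ii) and $M_1\le M$ --- equivalent in substance). For $M_k^{\sup}=L$ the paper splits into two cases according to whether $N^*(\cdot)$ is left continuous at $\tau_k$, and in the discontinuous case runs two separate sequence-based contradiction arguments (comparing $M_n\uparrow M_k^{\sup}$ against $N^*(T_n)\downarrow N_0$ for the upper bound, and assuming $M_k^{\sup}=N_0-2\delta$ for the lower bound). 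You avoid the case split entirely: the upper bound follows in one line from $t^*(M)\le t^*(N^*(T))=T<\tau_k$ whenever $N^*(T)<M$, and the lower bound is obtained by proving the stronger statement $t^*(L)=\tau_k$ (sup is attained), ruling out $t^*(L)<\tau_k$ by an exhaustive case analysis on where $t^*(L)$ falls and playing Theorem \ref{Theorem 1.2}(2) against the bound $N^*(t)>L$ for $t<\tau_k$. Your version is cleaner and, as a by-product, already yields $t^*(M_k^{\sup})=\tau_k$, which the paper only establishes afterwards in Lemma \ref{lemma 4.2}. The only points to be explicit about in a final write-up are the ones you already flag: $t^*(L)>\tau_1$ under (C1), and the applicability of Proposition \ref{Proposition 3.1} when $\tau_k=\gamma(y_0)$ (where $N^*(\tau_k)=0$ and the inequality $N^*(\tau_k)\le L$ is trivial).
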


\begin{proof} Fix $2\leq k\leq k_0$. The proof is divided into 2 steps.

{\it Step 1.}~We prove that
\begin{equation}\label{0717-5}
M_{k}^{\inf}=N^*(\tau_k).
\end{equation}
 By \eqref{2.6}, we have that $t^*\left(N^*(\tau_k)\right)=\tau_k$. This, combined with the definitions of $M_k^{\inf}$ and $M_k^{\sup}$, shows that
 \begin{equation}\label{0717-6}
 M_k^{\inf}\leq  N^*(\tau_k)\leq M_k^{\sup}.
 \end{equation}
 By contradiction, suppose that $M_k^{\inf}< N^*(\tau_k)$.
 Then by \eqref{0717-6} and \eqref{4.8}, there would exist $M\in\left(M_k^{\inf},N^*(\tau_k)\right)$
 such that  $t^*(M)=\tau_k$.  Then according to the existence of optimal controls to $({\rm TP})_{M}$,   there exists $u$ satisfying
 $$
 y(t^*(M);y_0, u)\in B_r(0) \textrm{ and }\|u\|_{\infty}\leq M.
 $$
 This, together with the fact that $t^*(M)=\tau_k$, indicates that $u$ is an admissible control for $({\rm NP})_{\tau_k}$
 and then $N^*(\tau_k)\leq M$, which leads to a contradiction. Thus we finish the proof of \eqref{0717-5}.

 {\it Step 2.}~We prove that
 \begin{equation}\label{0717-7}
 M_{k}^{\sup}=\lim\limits_{t\rightarrow \tau_k^-}N^*(t).
 \end{equation}
 \par According to Proposition  \ref{Proposition 3.1},
 there are only two possible cases  for the relationship between $\lim\limits_{t\rightarrow \tau_k^-}N^*(t)$ and $N^*(\tau_k)$:
 either $\lim\limits_{t\rightarrow \tau_k^-}N^*(t)=N^*(\tau_k)$ or $\lim\limits_{t\rightarrow \tau_k^-}N^*(t)>N^*(\tau_k)$.
 Next we prove  \eqref{0717-7} for each case separately.

 \par Case 1.
\begin{equation}\label{400}
\lim\limits_{t\rightarrow \tau_k^-}N^*(t)=N^*(\tau_k).
\end{equation}
\par In this case, from \eqref{4.7} and \eqref{0717-5}  we have $M_{k}^{\sup}\geq \lim\limits_{t\rightarrow \tau_k^-}N^*(t)$. By contradiction, if $M_{k}^{\sup}>\lim\limits_{t\rightarrow \tau_k^-}N^*(t)$,
then
\begin{equation*}\label{500}
M_{k}^{\sup}>M_{k}^{\inf}=N^*(\tau_k).
\end{equation*}
From Proposition  \ref{Proposition 3.2} and \eqref{400}, we know that $N^*(\cdot)$ is continuous and strictly decreasing over $(\tau_{k-1},\tau_k)$. Then for any $\widetilde{M}\in(N^*(
\tau_k),M_{k}^{\sup})$ sufficiently close to $N^*(\tau_k)$, there exists $\tau\in(\tau_{k-1},\tau_k)$, so that $N^*(\tau)=\widetilde{M}$. Then by using Theorem \ref{Theorem 1.2}, we get that
\begin{equation*}
\tau=t^*\big(N^*(\tau)\big)=t^*\big(\widetilde{M}\big).
\end{equation*}
This shows that $t^*\big(\widetilde M\big)<\tau_k$, which, together with the fact that
\begin{equation*}
M_{k}^{\inf}=N^*(\tau_k)<\widetilde{M}<M_{k}^{\sup},
\end{equation*}
leads to a contradiction with \eqref{4.8}.
\par Case 2. $\lim\limits_{t\rightarrow \tau_k^-}N^*(t)>N^*(\tau_k)$.
\par In this case, write $\lim\limits_{t\rightarrow \tau_k^-}N^*(t)=N_0$. First we claim that
\begin{equation}\label{4.11}
M_{k}^{\sup}\leq N_0.
\end{equation}
In fact, by the definition of $N_0$, there exists a sequence $\{T_n\}_{n\geq 1}\subset(\tau_{k-1},\tau_k)$ strictly  increasing to $\tau_k$ such that
\begin{equation*}\label{12.10}
\lim\limits_{n\rightarrow \infty} N^*(T_n)=N_0\,\,\text{ and }\,\,N^*(T_1)>N^*(T_2)>\cdot\cdot\cdot>N^*(T_n)>N_0.
\end{equation*}
Also there exists a sequence $\{M_n\}_{n\geq 1}$ monotonically increasing to $M_{k}^{\sup}$, so that $t^*(M_n)=\tau_k$.
We notice that
\begin{equation}\label{4.12}
M_n<N^*(T_n)~~\text{ for each }~~n\geq 1.
\end{equation}
Otherwise, if $M_n\geq N^*(T_n)$, then by the monotonicity of $t^*(\cdot)$ and Theorem \ref{Theorem 1.2}, we would have that
\begin{equation*}
T_n=t^*\big(N^*(T_n)\big)\geq t^*\big(M_n\big)=\tau_k.
\end{equation*}
This leads to a contradiction with the fact that $T_n\in(\tau_{k-1},\tau_k)$. Passing to the limit for $n\rightarrow\infty$ in \eqref{4.12}, we obtain \eqref{4.11}.
\par Second, by contradiction, suppose
\begin{equation*}
M_{k}^{\sup}=N_0-2\delta~~\text{ for some }~~\delta>0.
\end{equation*}
According to the monotonicity of $t^*(\cdot)$ proved in Proposition  \ref{Proposition 2.6}, we have $t^*\big(N_0-\delta\big)\leq t^*\big(M_{k}^{\sup}\big)$. This, combined with the definition of $M_{k}^{\sup}$ and the fact that $t^*\big(M_{k}^{\sup}\big)=\tau_k$, indicates that
\begin{equation}\label{600}
T_\delta \triangleq t^*\big(N_0-\delta\big)<t^*\big(M_{k}^{\sup}\big)= \tau_k.
\end{equation}
On one hand, by the monotonicity of $N^*(\cdot)$, \eqref{600} yields that
\begin{equation*}
N^*(T_\delta)>N^*(t)\,\,\text{ for any }\,\,t\in(T_\delta,\tau_k).
\end{equation*}
This, together with the fact that $\lim\limits_{t\rightarrow \tau_k^-}N^*(t)=N_0$, shows that
\begin{equation}\label{700}
N^*(T_\delta)\geq N_0.
\end{equation}
On the other hand, by \eqref{600} and the definition of $({\rm TP})_{N_0-\delta}$, there exists a control $\widetilde{u}$ so that
\begin{equation*}
\|\widetilde{u}\|_\infty\leq N_0-\delta~~\text{ and }~~\|y(T_\delta;y_0,\widetilde{u})\|\leq r.
\end{equation*}
This shows that $\widetilde{u}$ is an admissible control for problem $({\rm NP})_{T_\delta}$. Thus
\begin{equation*}\label{4.13}
N^*(T_\delta)\leq N_0-\delta,
\end{equation*}
which contradicts  \eqref{700}.
\par This completes the proof.
\end{proof}

\begin{lemma}\label{lemma 4.2}~Assume that (C1) stands. For each $2\leq k\leq k_0$, it stands that
\begin{equation}\label{0717-9}
t^*(M)=\tau_k \textrm{ for any }M\in\big[M_k^{\inf}, M_k^{\sup}\big].
\end{equation}
\end{lemma}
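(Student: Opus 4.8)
The plan is to identify the level set $S_k:=\{M\ge 0:\,t^*(M)=\tau_k\}$ with the closed interval $[M_k^{\inf},M_k^{\sup}]$. Since $t^*(\cdot)$ is non-increasing by Proposition \ref{Proposition 2.6}, this (nonempty) set is convex, hence an interval whose infimum and supremum are, by definition, exactly $M_k^{\inf}$ and $M_k^{\sup}$; in particular $S_k\subseteq[M_k^{\inf},M_k^{\sup}]$. Equation \eqref{4.8} already records that $t^*(M)=\tau_k$ throughout the open interior $(M_k^{\inf},M_k^{\sup})$ and at the common value when the interval degenerates to a point, so the only thing left to verify is that the two endpoints belong to $S_k$ in the case $M_k^{\inf}<M_k^{\sup}$. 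Once this is done, combining with \eqref{4.8} gives $[M_k^{\inf},M_k^{\sup}]\subseteq S_k$, and the reverse inclusion is immediate.

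For the left endpoint I would invoke Lemma \ref{lemma 4.1}, which gives $M_k^{\inf}=N^*(\tau_k)$, together with Theorem \ref{Theorem 1.2}(1) applied at $T=\tau_k\in(\tau_1,\gamma(y_0)]$ (admissible since $k\ge 2$ and $k\le k_0$), which yields $t^*\big(N^*(\tau_k)\big)=\tau_k$. Hence $t^*(M_k^{\inf})=\tau_k$ at once.

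The right endpoint is the main obstacle, as $t^*(M_k^{\sup})=\tau_k$ is precisely the statement that the plateau remains closed at the jump of $N^*(\cdot)$. Here I would use Lemma \ref{lemma 4.1} in its second form $M_k^{\sup}=\lim_{t\to\tau_k^-}N^*(t)$ and approach from the left: choose $\{T_n\}\subset(\tau_{k-1},\tau_k)$ strictly increasing to $\tau_k$. By the strict monotonicity of $N^*(\cdot)$ from Proposition \ref{Proposition 3.1}, the values $N^*(T_n)$ strictly decrease to $M_k^{\sup}$, so $N^*(T_n)>M_k^{\sup}$ for every $n$, while Theorem \ref{Theorem 1.2}(1) gives $t^*\big(N^*(T_n)\big)=T_n$. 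Applying the monotonicity of $t^*(\cdot)$ to $M_k^{\sup}<N^*(T_n)$ yields $t^*(M_k^{\sup})\ge t^*\big(N^*(T_n)\big)=T_n$, and letting $n\to\infty$ gives $t^*(M_k^{\sup})\ge\tau_k$. For the reverse inequality I would pick any $M\in(M_k^{\inf},M_k^{\sup})$, for which $t^*(M)=\tau_k$ by \eqref{4.8}, and use monotonicity on $M<M_k^{\sup}$ to get $t^*(M_k^{\sup})\le\tau_k$. Combining the two bounds gives $t^*(M_k^{\sup})=\tau_k$ and completes the identification $S_k=[M_k^{\inf},M_k^{\sup}]$.

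I expect the delicate point to be the lower bound $t^*(M_k^{\sup})\ge\tau_k$. It cannot be read off from continuity of $t^*(\cdot)$, which is not yet available; instead it is forced by approaching $M_k^{\sup}$ from \emph{above} through the images $N^*(T_n)$ of the equivalence map, so that the monotonicity of $t^*(\cdot)$ together with $t^*\big(N^*(T_n)\big)=T_n\to\tau_k$ pushes the optimal time up to $\tau_k$ in the limit. The strict monotonicity of $N^*(\cdot)$ is exactly what guarantees $N^*(T_n)>M_k^{\sup}$, which is what makes the monotonicity inequality applicable.
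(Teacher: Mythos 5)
Your proposal is correct and follows essentially the same route as the paper's own proof: the left endpoint is handled via $M_k^{\inf}=N^*(\tau_k)$ and $t^*(N^*(\tau_k))=\tau_k$ from Theorem \ref{Theorem 1.2}, the upper bound $t^*(M_k^{\sup})\le\tau_k$ comes from monotonicity against interior points of the plateau, and the key lower bound $t^*(M_k^{\sup})\ge\tau_k$ is obtained exactly as in the paper by approaching $M_k^{\sup}$ from above through the values $N^*(T_n)>M_k^{\sup}$ with $T_n\uparrow\tau_k$ and invoking $t^*(N^*(T_n))=T_n$ together with the monotonicity of $t^*(\cdot)$. Your identification of this last step as the delicate point, not recoverable from continuity, matches the structure of the paper's argument.
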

\begin{proof}
Let $2\leq k\leq k_0$ be fixed. Without loss of generality, we assume that
\begin{equation*}
M_k^{\inf}<M_k^{\sup}.
\end{equation*}

Firstly,  by the first equation in \eqref{300} and \eqref{2.6}, we have
\begin{equation}\label{0717-8}
t^*\big(M_k^{\inf}\big)=t^*\big(N^*(\tau_k)\big)=\tau_k.
\end{equation}

Secondly,
on one hand, by the definition of $M_k^{\sup}$,
there exists a sequence $\{M_n\}_{n\in \mathbb N^+}\subset\big(M_k^{\inf},M_k^{\sup}\big)$
strictly increasing to $M_k^{\sup}$ and
$
t^*(M_n)=\tau_k.
$
This, together with Proposition \ref{Proposition 2.6}, shows that
\begin{equation*}
t^*(M_k^{\sup})\leq \tau_k.
\end{equation*}
On the other hand, let $\{T_n\}_{n\in\mathbb N^+}\subset(\tau_{k-1},\tau_k)$ be a sequence strictly increasing to $\tau_k$.
By Proposition \ref{Proposition 3.1} and the second equation in \eqref{300}  , we have that
\begin{equation*}
N^*(T_1)>N^*(T_2)>\cdots>N^*(T_n)>\cdots>M^{\sup}_k.
\end{equation*}
This, together with Proposition \ref{Proposition 2.6} and (i) of Theorem \ref{Theorem 1.2}, shows that
\begin{equation*}
\tau_k\leq t^*(M_k^{\sup}).
\end{equation*}
Therefore, we have
\begin{equation*}
\tau_k= t^*(M_k^{\sup}).
\end{equation*}
This, together with \eqref{0717-8} and \eqref{4.8}, completes the proof.
\end{proof}

Based on Theorem \ref{Theorem 1.2}, Lemma \ref{lemma 2.2}, Lemma \ref{lemma 3.2},   Corollary \ref{Corollary 2.4} and Lemma  \ref{lemma 4.1},
 we have the following corollary for problem $({\rm NP})_T$.
\begin{corollary}
Assume that (C1) stands. For each $T\in(\tau_1,\gamma(y_0)]$, $({\rm NP})^T$ admits a unique optimal control, denoted by  $u^T=(u^T_j)_{j\in\mathbb N^+}$.
Moreover, the optimal control is bang-bang, that is,
$$\|u^T_j\|=N^*(T)  \textrm{ for each }1\leq j\leq k,$$
where $k$ is the maximum integer such that $\tau_k\leq T$.
\end{corollary}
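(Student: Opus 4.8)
The plan is to take existence for free from Lemma~\ref{lemma 3.2}, and then to obtain uniqueness together with the bang-bang identity by transporting the corresponding facts from the time optimal problem through the equivalence of Theorem~\ref{Theorem 1.2}. I would partition $T\in(\tau_1,\gamma(y_0)]$ into three situations: (a) $T=\gamma(y_0)$; (b) $T<\gamma(y_0)$ and $T\in(\tau_k,\tau_{k+1})$ for some $1\le k\le k_0$; and (c) $T=\tau_k$ for some $2\le k\le k_0$ with $\tau_k<\gamma(y_0)$. Situation (a) is immediate: since $N^*(\gamma(y_0))=0$, any admissible control $u^T$ must satisfy $\|u^T\|_\infty=0$, so $u^T=0$ is the unique optimal control and the identity $\|u^T_j\|=0=N^*(T)$ holds trivially.

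For situation (b), I would first invoke part~(1) of Theorem~\ref{Theorem 1.2} to record both $t^*(N^*(T))=T$ and the fact that every optimal control for $({\rm NP})_T$ is an optimal control for $({\rm TP})_{N^*(T)}$. Since $T\in(\tau_k,\tau_{k+1})$, the optimal time $t^*(N^*(T))$ lies in this open interval, so Lemma~\ref{lemma 2.2} applies with $M=N^*(T)$: the optimal control for $({\rm TP})_{N^*(T)}$ is unique and satisfies $\|u_j\|=N^*(T)$ for $1\le j\le k$. Uniqueness then transfers, since two distinct optimal controls for $({\rm NP})_T$ would yield two distinct optimal controls for $({\rm TP})_{N^*(T)}$, a contradiction; and the bang-bang norms carry over verbatim.

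For situation (c) I must work with the minimal-norm optimal control, because optimal controls for $({\rm TP})$ need be neither unique nor bang-bang when the optimal time hits an impulse instant. I would apply part~(2)(ii) of Theorem~\ref{Theorem 1.2} with $M=N^*(\tau_k)$; since $t^*(N^*(\tau_k))=\tau_k$ by \eqref{2.6}, this gives $M_1=N^*(t^*(M))=N^*(\tau_k)=M$, where $M_1$ is the minimal norm among optimal controls for $({\rm TP})_M$, and the minimal-norm optimal control $\widetilde u$ is an optimal control for $({\rm NP})_{\tau_k}$. Corollary~\ref{Corollary 2.4} then makes $\widetilde u$ unique and bang-bang with $\|\widetilde u_j\|=N^*(\tau_k)$ for $1\le j\le k$. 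To close the uniqueness, I would take any optimal control $v$ for $({\rm NP})_{\tau_k}$: it satisfies $\|v\|_\infty=N^*(\tau_k)=M$, reaches $B_r(0)$ at the optimal time $\tau_k=t^*(M)$, and has $v_j=0$ for $j>k$, so $v$ is an optimal control for $({\rm TP})_M$ of norm $M=M_1$, hence a minimal-norm optimal control; the uniqueness in Corollary~\ref{Corollary 2.4} forces $v=\widetilde u$.

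The main obstacle is situation (c): the delicate point is the bookkeeping that identifies $M_1$ with $N^*(\tau_k)$, which is what upgrades an a priori non-unique, possibly non-bang-bang time optimal problem into one whose minimal-norm optimal control is unique and bang-bang, and which lets every $({\rm NP})_{\tau_k}$ optimal control be recognized as exactly this minimal-norm control. The remaining cases are essentially direct transfers through Theorem~\ref{Theorem 1.2} and require no further work beyond the monotonicity and existence already available.
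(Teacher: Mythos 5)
Your proposal is correct and follows essentially the same route as the paper: existence from Lemma~\ref{lemma 3.2}, transfer to the time optimal problem via Theorem~\ref{Theorem 1.2}, then Lemma~\ref{lemma 2.2} when $T$ is not an impulse instant and Corollary~\ref{Corollary 2.4} when $T=\tau_k$, with the key identification $M_1=N^*(\tau_k)$ (which you extract from \eqref{2.8} while the paper routes it through Lemma~\ref{lemma 4.1}, an immaterial difference). Your explicit closing argument that every $({\rm NP})_{\tau_k}$-optimal control must coincide with the minimal-norm control is a welcome bit of extra care that the paper leaves implicit.
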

\begin{proof}
When $T\neq \tau_k$ for any $2\leq k\leq k_0$, the results follow from Lemma \ref{lemma 3.2}, (i) of Theorem \ref{Theorem 1.2} and Lemma \ref{lemma 2.2}.

When $T= \tau_k$ for some $2\leq k\leq k_0$, by   Lemma \ref{lemma 3.2} and  (i) of Theorem \ref{Theorem 1.2},
there exists an  optimal control $u^k$ to $({\rm NP})_{\tau_k}$ with $\|u^k\|_{\infty}=N^*(\tau_k)$, which is
an optimal control for $({\rm TP})_{N^*(\tau_k)}$. Moreover, it stands that
$$
t^*\big(N^*(\tau_k)\big)=\tau_k.
$$
This, together with the definition of $M_k^{\inf}$ and  the first equation in \eqref{300},
shows that $u^k$ is an optimal control with minimal norm for $({\rm TP})_{N^*(\tau_k)}$.
Then the results follow from Corollary \ref{Corollary 2.4}.
\end{proof}

 We  are finally in a position to prove  Theorem \ref{Theorem 4.4}, which is mainly concerned with  the continuity  and monotonicity of the minimal time function $t^*(\cdot)$.

\begin{proof}[Proof of Theorem \ref{Theorem 4.4}]
By the definition of $\gamma(y_0)$ and the assumption (C1), it is easy to verify  that \eqref{0717-15} is true.
For  convenience,  we also denote
$$
t^*(+\infty)=\tau_1, \,N^*(\tau_1)=+\infty \textrm{ and }M_1^{\inf}=+\infty.
$$
Without loss of generality, we suppose that $\gamma(y_0)=\tau_{k_0}$. Then by the definitions of $\gamma(y_0)$ and Lemma \ref{lemma 4.2},
we have that $M_{k_0}^{\inf}=0$. This, together with \eqref{4.7}, shows that
$$
[0,+\infty)\backslash \left(\cup_{k=2}^{k_0}\left[M_{k}^{\inf},M_{k}^{\sup}\right]\right)
= \cup_{k=2}^{k_0}\left(M_{k}^{\sup}, M_{k-1}^{\inf}\right).
$$

Fix  $2\leq k\leq k_0$, we first prove that $t^*(\cdot)$ is continuous and strictly decreasing over $(M_{k}^{\sup}, M_{k-1}^{\inf}]$.
By Theorem \ref{Theorem 1.2} and \eqref{0717-9}, as well as  the definitions of $M_k^{\inf}$ and $M_k^{\sup}$ , we have that
\begin{equation}\label{0717-10}
t^*(N^*(T))=T \textrm{ for  any } T\in [\tau_{k-1}, \tau_{k})
\end{equation}
and
\begin{equation*}\label{0717-11}
N^*(t^*(M))=M \textrm{ for  any } M\in (M_{k}^{\sup}, M_{k-1}^{\inf}].
\end{equation*}
By Proposition \ref{Proposition 3.1}, Proposition \ref{Proposition 3.2} and Lemma \ref{lemma 4.1}, we have that
$N^*(\cdot)$ is continuous and strict decreasing over $[\tau_{k-1},\tau_{k})$. Moreover,
\begin{equation}\label{0717-12}
N^*(\cdot) \textrm{ maps }[\tau_{k-1},\tau_{k}) \textrm{ to }(M_{k}^{\sup}, M_{k-1}^{\inf}].
\end{equation}
By \eqref{0717-10}-\eqref{0717-12}, we know that the functions $t^*(\cdot)$ restricted to $(M_{k}^{\sup}, M_{k-1}^{\inf}]$
and $N^*(\cdot)$ restricted to $[\tau_{k-1},\tau_{k})$ are  inverses of each other.
Therefore the function $t^*(\cdot)$ is continuous and strictly decreasing over $(M_{k}^{\sup}, M_{k-1}^{\inf}]$ for each $2\leq k\leq k_0$.
This, together with Lemma \ref{lemma 4.2}, shows that $t^*(\cdot)$ is continuous over $[0,+\infty)$.
\end{proof}

\begin{remark}\label{Remark 3.5}~Assume that (C1) stands.

 (i)\,\,One may ask if the function ¡°$N^*(\cdot)$¡± is also continuous at each instant $\tau_k$. The following  example shows that  $N^*(\cdot)$ may only be right continuous but not continuous at the impulse instants.
\par Let $e_1$ be the normal eigenfunction corresponding to the first eigenvalue $\lambda_1$ of $-\Delta$ with zero boundary. Set $y_0=e_1$ and $r=\frac{1}{6}$ in  system \eqref{2.1}. Then we have
\begin{equation*}
y(t;y_0,0)=e^{-\lambda_1t}y_0~~\text{ and }~~\gamma(y_0)=\frac{1}{\lambda_1}\ln 6.
\end{equation*}
Let $\tau_1=\frac{1}{\lambda_1}\ln 2$  and $\tau_2=\frac{1}{\lambda_1}\ln 4$ be the two impulse instants in $(0,\gamma(y_0))$. Let $u=(u_1,u_2)$ be the control. Assume that $\omega_1=\omega_2=\Omega$, then we have that
\begin{align*}
y(\tau_2;y_0,u)&=\chi_{\omega_2}u_2+e^{-\lambda_1(\tau_2-\tau_1)}\chi_{\omega_1}u_1+e^{-\lambda_1\tau_2}y_0\\
&=u_2+\frac{1}{2} u_1+\frac{1}{4} e_1
\end{align*}
and
\begin{align*}
y(\tau_2^-;y_0,u)&=e^{-\lambda_1(\tau_2-\tau_1)}\chi_{\omega_1}u_1+e^{-\lambda_1\tau_2}y_0=\frac{1}{2} u_1+\frac{1}{4} e_1.
\end{align*}
By using the bang-bang property of $({\rm NP})_{\tau_2}$, we have that $N^*(\tau_2)=\frac{1}{18}$ and $\lim\limits_{t\rightarrow \tau_2^-}N^*(t)=\frac{1}{6}$. Therefore, $N^*(\cdot)$ is not continuous at $\tau_2$.
\par(ii)\,\,The function $t^*(\cdot)$ need not be strictly decreasing on $[0,+\infty)$. For instance,  in the example provided in (i),  Lemma \ref{lemma 4.1} implies  $M_{2}^{\inf}=\frac{1}{18}$, and $M_{2}^{\sup}=\frac{1}{6}$. Then by Lemma \ref{lemma 4.2},  we have  $t^*(M)=\tau_2$ for any $M\in\left[\frac{1}{18},\frac{1}{6}\right]$.
\par(iii)\,\,Based on the conclusions we have reached, the functions $N^*(\cdot)$ and $t^*(\cdot)$ can be roughly plotted, as shown in  Figure 1 and Figure 2.
\begin{figure}[htbp]
    \centering
    \begin{minipage}{0.46\textwidth}
        \centering
        \includegraphics[width=\textwidth]{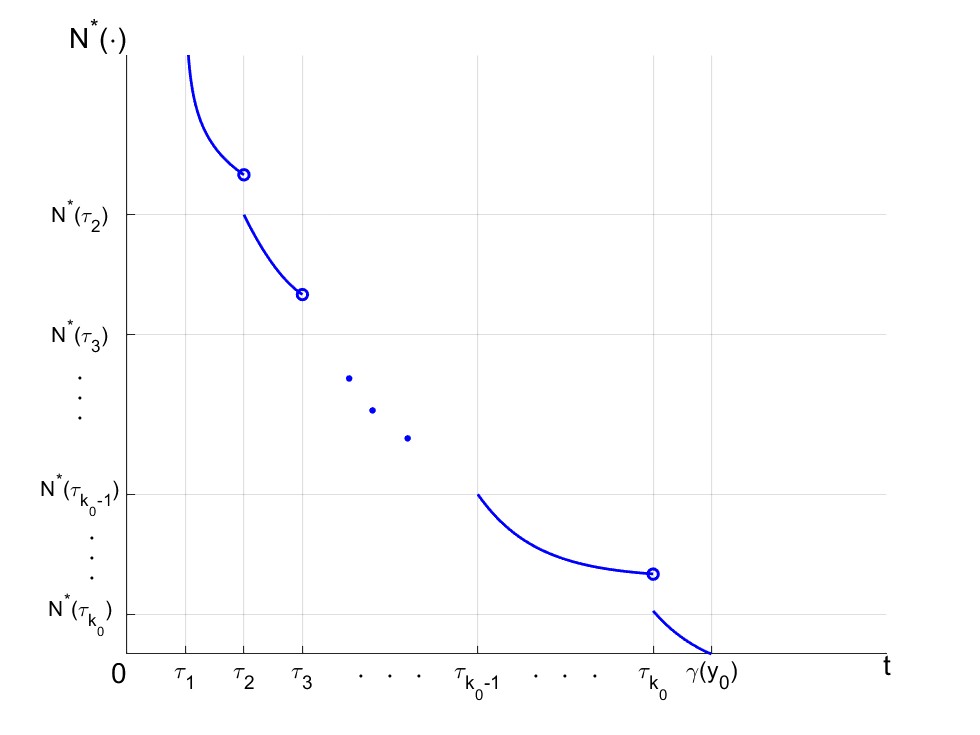}
        \caption{A general plot of the function $N^*(\cdot)$}  
    \end{minipage}
    \hspace{0.04\textwidth}  
    \begin{minipage}{0.46\textwidth}
        \centering
        \includegraphics[width=\textwidth]{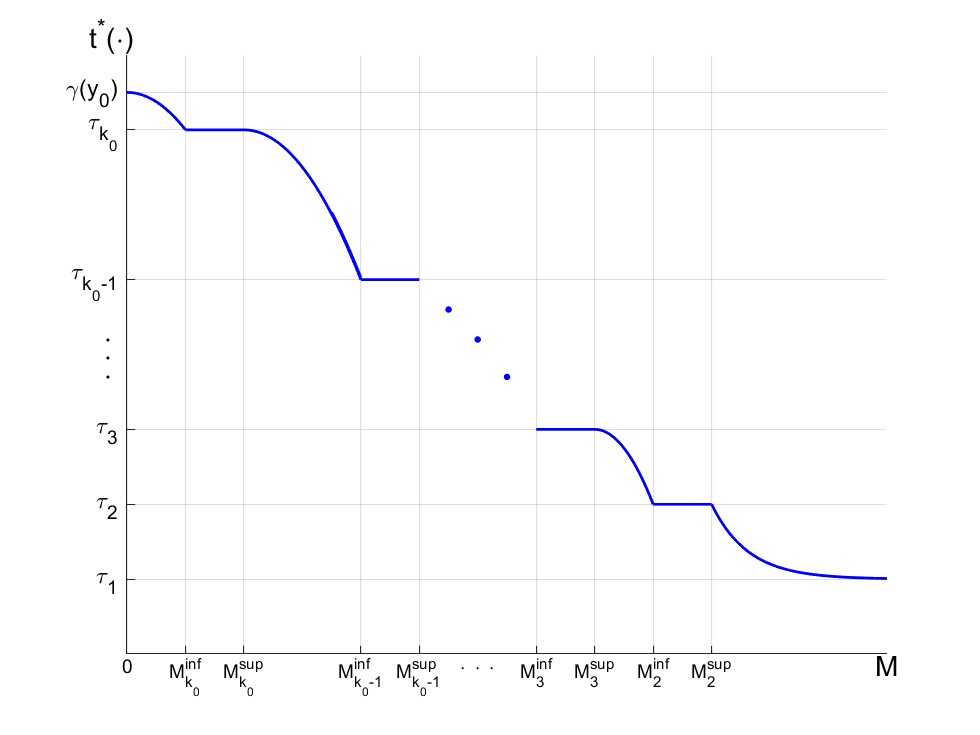}
        \caption{A general plot of the function $t^*(\cdot)$}
    \end{minipage}
\end{figure}
\end{remark}

\begin{remark}\label{Remark 3.6}~The results presented in Section 3 and Section 4 are derived under   condition (C1).
 Analogous results remain valid  under  condition (C2) with only minor modifications.
 Defining $\widehat{M}:=N^*(\tau_1)$, we obtain the following results for this case.
 \begin{itemize}
\item[(i)]
For each $T\in[\tau_1,\gamma(y_0)]$, problem $({\rm NP})_T$ has a unique optimal control and the optimal control is bang-bang.
\item[(ii)] The function $N^*(\cdot)$ is strictly  decreasing over $[\tau_1,\gamma(y_0)]$, right continuous over $[\tau_1,\gamma(y_0)]$ and left continuous over $[\tau_1,\gamma(y_0)]\backslash\{\tau_k\}_{k=1}^{k_0}$.
\item[(iii)]
The function $t^*(\cdot):\,[0,+\infty)\to[\tau_1, \gamma(y_0)]$ is continuous. We can also define $M_k^{\sup}$ and $M_k^{\inf}$ by \eqref{4.5} and \eqref{4.6} for each $2\leq k\leq k_0$, respectively. It keeps still over $[M_{k}^{\inf},M_{k}^{\sup}]$ for each $2\leq k\leq k_0$, and is strictly  decreasing over $\left[0,\widehat M\right)\backslash \left(\cup_{k=2}^{k_0}\left[M_{k}^{\inf},M_{k}^{\sup}\right]\right)$.
 We also know that $t^*(0)=\gamma(y_0)$ and $t^*(M)=\tau_1$ for any $M\geq\widehat{M}$.
\item[(iv)]
The results in Theorem \ref{Theorem 1.2} are still true. Besides, we have that $t^*(N^*(\tau_1))=\tau_1$, and the optimal control for $({\rm NP})_{\tau_1}$
is the optimal control for $({\rm TP})_{\widehat M}$.   For any $M\geq \widehat M$,   $N^*(t^*(M))=\widehat M$, and the optimal control with minimal norm for
$({\rm TP})_M$ is the optimal control for $({\rm NP})_{\tau_1}$.
\end{itemize}
\end{remark}

\section{Appendix}
\begin{proof}[Proof of Lemma \ref{Theorem 2.1}]
It is easy to show that
\begin{center}
$0$ is an admissible control for problem $({\rm TP})_M$.
\end{center}
In fact, it stands that
\begin{equation*}\label{3.1}
\|y(t;y_0,0)\|=\|e^{\Delta t}y_0\|\leq e^{-\lambda_1 t}\|y_0\|,
\end{equation*}
where $\lambda_1$ is the first eigenvalue of $-\Delta$ with homogeneous Dirichlet boundary on $\partial\Omega$. Then we obtain that
\begin{equation*}\label{3.31}
\|y(t;y_0,0)\|\leq r\,\,\text{ when }\,\,t=\tau_1+\frac{1}{\lambda_1}\ln\frac{2r+\|y_0\|}{r},
\end{equation*}
which indicates that $y(t;y_0,0)\in B_r(0)$. Hence, $0$ is an admissible control for problem $({\rm TP})_M$.
\par Suppose that $t^*(M)\in[\tau_k,\tau_{k+1})$ for some $k\in\mathbb{N}^+$. We next prove that $({\rm TP})_M$ has at least one optimal control. According to the existence of admissible controls, there exist two sequences $\big\{T_n\big\}_{n\geq1}\subseteq[\tau_k,\tau_{k+1})$ and $\big\{u^n\big\}_{n\geq1}\subseteq\mathcal{U}_M$ so that
\begin{equation}\label{3.2}
T_n\rightarrow t^*(M)\in[\tau_k,\tau_{k+1})~~\text{and}~~\big\{y(T_n;y_0,u^n)\big\}_{n\geq1}\subseteq B_r(0),
\end{equation}
where $u^n=(u^{n}_1,u^{n}_2,...,u^{n}_k,\,0,\,0,...)$. Since $\{u^{n}\}_{n\geq 1}\subseteq \mathcal{U}_M$, there exists a subsequence of $\{n\}_{n\geq1}$, still denoted by itself, and a control $\widehat{u}=\left(\widehat{u}_1,\widehat{u}_2,...,\widehat{u}_k,\,0,\,0,...\right)\in \mathcal{U}_M$, so that
\begin{equation}\label{3.3}
u^{n}_j\,\xrightarrow{w}\,\widehat{u}_j ~~\text{in}~~ L^2(\Omega)~~\text{for each}~~1\leq j\leq k.
\end{equation}
Denote $y^n(\cdot):=y(\cdot;y_0,u^n)$~~and~~$\widehat{y}(\cdot):=y(\cdot;y_0,\widehat{u})$. Then, we have
\begin{equation}\label{3.101}
y^n(t)=e^{\Delta t}y_0+\sum\limits_{j=1}^ke^{\Delta(t-\tau_j)}\chi_{\omega_j}u^{n}_j~~\text{ for any }~~t\in[\tau_k,\tau_{k+1})
\end{equation}
and
\begin{equation*}
\widehat{y}(t)=e^{\Delta t}y_0+\sum\limits_{j=1}^ke^{\Delta(t-\tau_j)}\chi_{\omega_j}\widehat{u}_j~~\text{ for any }~~t\in[\tau_k,\tau_{k+1}).
\end{equation*}
This, combined with \eqref{3.3}, shows that
\begin{equation}\label{3.4}
y^n(t){\,\xrightarrow{w}\,}\widehat{y}(t)~~\text{in}~~L^2(\Omega)~\text{ for any }~t\in[\tau_k,\tau_{k+1}).
\end{equation}
Since \eqref{3.101} stands and $\|u_j^n\|\leq M$ for each $1\leq j\leq k$, according to the first relation of \eqref{3.2}, we can easily get that
\begin{equation}\label{3.60}
y^n(T_n)\rightarrow y^n\big(t^*(M)\big)\,\,\text{ as }\,\,n\rightarrow \infty.
\end{equation}
Since
\begin{equation*}
y^n(T_n)-\widehat{y}\big(t^*(M)\big)=y^n(T_n)-y^n\big(t^*(M)\big)+y^n\big(t^*(M)\big)-\widehat{y}\big(t^*(M)\big),
\end{equation*}
by \eqref{3.4} and \eqref{3.60}, we can directly check that
\begin{equation*}\label{3.7}
y\big(T_n;y_0,u^n\big){\,\xrightarrow{w}\,\,}y\big(t^*(M);y_0,\widehat{u}\big)~~\text{in}~~L^2(\Omega).
\end{equation*}
This, together with the second relation of \eqref{3.2}, implies that $y\big(t^*(M);y_0,\widehat{u}\big)\in B_r(0)$. Thus, $\widehat{u}$ is an optimal control for problem $({\rm TP})_M$.

The existence of an optimal control with minimal norm
then follows by considering a minimizing sequence for the norm within the set of optimal controls (which is nonempty, as just proved)
and applying a similar weak convergence argument.

\end{proof}

\end{document}